  \newtheorem{theorem}{Theorem}
\newtheorem{proposition}[theorem]{Proposition}
\newproof{proof}{Proof}
\newdefinition{remark}[theorem]{Remark}
\newdefinition{definition}[theorem]{Definition}
\newcommand{\textupmd}[1]{\textup{\textmd{#1}}}
\newcommand{\SpDim}{N}
\newcommand{\numberspacefont}{\boldsymbol}
\newcommand{\R}{\numberspacefont{R}}
\newcommand{\N}{\numberspacefont{N}}
\newcommand{\Oset}{\varOmega}
\newcommand{\const}{\gamma}
\newcommand{\maxT}{T}
\newcommand{\@di}{\textupmd{d}}
\newcommand{\di}{\,\@di}
\newcommand{\grad}{\operatorname{\nabla}}
\newcounter{da@aux}
\newcounter{da@auxb}
\newcommand{\onlyifnotone}[1]{\ifnum\arabic{#1}=1\else\arabic{#1}\fi}
\newcommand{\der}[3][1]{
\ifthenelse{ \equal{#1}{1} }{ \def\@deraux{} }{ \def\@deraux{#1} }
\frac{\@di^{\@deraux}#2}
            {\@di #3^{\@deraux}}}
\newcommand{\pder}[2]{\setcounter{da@aux}{-1}\setcounter{da@auxb}{0}%
\def\Last@Tok{\relax}%
\def\List@Tok{\relax}%
\Check@Tok#2\relax%
\frac{\partial^{\onlyifnotone{da@auxb}} #1}{\List@Tok}}
\newcommand{\Check@Tok}[1]{\xdef\@Argom{#1}
\ifnum\arabic{da@aux}=-1 \setcounter{da@aux}{0}
\xdef\Last@Tok{\@Argom}\fi%
\ifx#1\relax \let\next=\relax%
\xdef\ProvList@Tok{\List@Tok}%
\xdef\List@Tok{\ProvList@Tok \partial\Last@Tok^{\onlyifnotone{da@aux}}}%
\addtocounter{da@auxb}{\value{da@aux}}%
\else%
\ifx\Last@Tok\@Argom\stepcounter{da@aux}%
\else
\xdef\ProvList@Tok{\List@Tok}%
\xdef\List@Tok{\ProvList@Tok \partial\Last@Tok^{\onlyifnotone{da@aux}}}%
\xdef\Last@Tok{\@Argom}%
\addtocounter{da@auxb}{\value{da@aux}}
\setcounter{da@aux}{1}%
\fi
\let\next=\Check@Tok
\fi
\next
}
\DeclareMathOperator{\Lapl}{\Delta}
\newcommand{\Lspbasename}{L}
\newcommand{\Lsp}[3][\relax]{\ifthenelse{\equal{#3}{\empty}}%
{{{\Lspbasename}^{#2}}}{{{\Lspbasename}^{#2}}#1(#3#1)}}
\newcommand{\Cspbasename}{C}
\newcommand{\Csp}[3][\relax]{\ifthenelse{\equal{#3}{\empty}}%
{{{\Cspbasename}^{#2}}}{{{\Cspbasename}^{#2}}#1(#3#1)}}
\newcommand{\Diffu}{D}
\newcommand{\unk}{u}
\newcommand{\defeq}{:=}
\newcommand{\Norm}[1]{\left\lVert#1\right\rVert}
\newcommand{\Norma}[2]{\Norm{#1}_{#2}}
\newcommand{\Measbase}[2][*]{\ifthenelse{\equal{#1}{*}}%
{\lvert#2\rvert}{\left|#2\right|}}
\newcommand{\meas}[1]{\Measbase[*]{#1}}
\newcommand{\oo}{\infty}
\newcommand{\eps}{\varepsilon}
\newcommand{\phj}{\varphi}
\newcommand{\textem}[1]{\emph{#1}}
\newcommand{\ignore}[1]{\relax}
\newcommand{\tfix}{\bar{t}}
\newcommand{\openpores}{\Mpores_{\openset}}
\newcommand{\inttrace}[1]{#1_{\textup{int}}}
\newcommand{\exttrace}[1]{#1_{\textup{ext}}}
\newcommand{\solspace}{V}
\newcommand{\constcp}{\const_{0}}
\newcommand{\maxconst}{\Lambda}
\newcommand{\ut}{\unk^{\Mper}}
\newcommand{\constidK}{l_{1K}}
\newcommand{\Mper}{\tau}
\newcommand{\Mwid}{\eps}
\newcommand{\Mdpt}{\delta}
\newcommand{\Mcell}{B_{\Mwid}}
\newcommand{\Mcelltwo}{B_{2\Mwid}}
\newcommand{\Mpwid}{\sigma_{\Mwid}}
\newcommand{\Mopen}{\sigma_{\Mper}}
\newcommand{\Mpore}{P^{\Mwid}_i}
\newcommand{\basepore}{P_0}
\newcommand{\Mpores}{\mathcal{P}}
\newcommand{\inter}{\Gamma}
\newcommand{\openset}{A_{\Mper}}
\newcommand{\Mpoints}{z_i^{\Mwid}}
\newcommand{\Mpointsj}{z_m^{\Mwid}}
\newcommand{\mw}{m_{\Mwid}}
\newcommand{\mt}{m_{\Mper}}
\newcommand{\Osett}{\Oset^{\Mper}}
\newcommand{\pwido}{\varLambda_0}
\newcommand{\bdrp}[1]{\partial_{+}#1}
\newcommand{\xN}{x_{\SpDim}}
\newcommand{\Lflxs}{\Phi}
\newcommand{\fstconstNa}{l_{f_{Na}}}
\newcommand{\smlconstNa}{l_{s_{Na}}}
\newcommand{\fstconstK}{l_{f_K}}
\newcommand{\smlconstK}{l_{s_K}}
\newcommand{\mesfun}{M}
\newcommand{\mesfuno}{\mesfun_0}
\newcommand{\limconst}{\varrho}
\newcommand{\fstlNa}{l^{\Mper}_{f_{Na}}}
\newcommand{\smllNa}{l^{\Mper}_{s_{Na}}}
\newcommand{\fstlK}{l^{\Mper}_{f_K}}
\newcommand{\smllK}{l^{\Mper}_{s_K}}
\newcommand{\smlD}{D_1}
\newcommand{\bigD}{D_0}
\newcommand{\constD}{D_0}
\newcommand{\Mbor}{a}
\newcommand{\maxfun}{v}
\newcommand{\outnormal}{\widehat{\nu}}
\newcommand{\bb}[1]{{\mathbb{#1}}}
\begin{document}

\title{A model for enhanced and selective
transport through biological membranes with alternating pores}
\author[sbai]{Daniele Andreucci}
\ead{daniele.andreucci@sbai.uniroma1.it}
\author[sbai]{Dario Bellaveglia}
\ead{dario.bellaveglia@sbai.uniroma1.it}
\author[sbai]{Emilio Nicola Maria Cirillo}
\ead{emilio.cirillo@uniroma1.it}

\address[sbai]{Dept. of Basic and Applied Sciences for Engineering\\
via A.Scarpa 16 00161 Roma Italy}

\begin{abstract}
  We investigate the outflux of ions through the channels in a cell
  membrane. The channels undergo an open/close cycle according to a
  periodic schedule. Our study is based both on theoretical
  considerations relying on homogenization theory, and on Monte Carlo
  numerical simulations. We examine the onset of a limiting boundary
  behavior characterized by a constant ratio between the outflux and
  the local density, in the thermodynamics limit. The focus here is on
  the issue of selectivity, that is on the different behavior of the
  ion currents through the channel in the cases of the selected and
  non-selected species.
\end{abstract}

\begin{keyword}
  ionic currents, random walk, homogenization, Monte Carlo method,
  alternating pores, Fokker-Planck equation.
\end{keyword}
\maketitle
\section{Introduction}
\label{s:intro}

\subsection{The model}
Potassium currents across cell membranes have been widely studied,
since they play many important and different functional roles (see,
e.g., the
reviews \cite{Hille:2001,RCM}).  Indeed, ionic
channels selecting the flux of Potassium ions are ubiquitous in all
organisms.

We confine ourselves here to recall that ionic channels form selective
pores in the cell membrane which open and close and, when in the open
state, allow permeation of ions favoring selection of a species
(Potassium in K$^+$--channels).  The processes turning on and off ion
conduction, i.e., \textit{gating}, and the channel ability to allow
the flux of a particular ionic species, i.e., \textit{selectivity},
are not yet completely understood, but it has been known for some time
in the literature the conjecture that they are functionally linked
\cite{VanDongen:2004}. The idea is that the density of selected ions is
higher in a region close to the pore where the affinity is higher;
when the pore opens, passive diffusion together with such an unbalance
in concentration is enough to cause a selective outflux. 

We model this phenomenon as a diffusion problem in a domain with
alternating pores on the boundary. The latter are holes periodically
and simultaneously cycling through open and closed phases. We model
the affinity to the selected species by setting the corresponding
diffusivity smaller in a suitably chosen small region in the
neighborood of the pores.  Assuming that the pores are many and with
small diameter with respect to the dimensions of the domain and also
that the period of the cycle is much smaller than the characteristic
time of diffusion, it is possible to approximate the problem with its
homogenized version, where the number of pores and the number of
cycles both diverge to infinity and the diffusion in the affinity
region goes to zero.  In \cite{Andreucci:2012} we introduced four
parameters: the distance between neighboring pores $\Mwid$, the
diameter of each pore $\Mpwid$, the period of the opening/closing
cycle $\Mper$, and finally $\Mopen$ as the length of the time
sub-interval of each cycle in which the pores are open. If the
parameters $\Mwid$, $\Mpwid$, $\Mper$ and $\Mopen$ are properly
related, in the asymptotic limit when they become vanishingly small we
obtain a limiting boundary condition of the type
\begin{equation}
\label{eq:intro_lim_cond}
  \bigD\pder{\unk}{\nu}(x,t)
  =
  \varphi(x)
  \bigD
  \unk(x,t)
  \,,
\end{equation}
where $\bigD$ is the diffusivity of the ions in the cytosol, $\varphi$
is a function connected to the distribution and to the shape of pores
and $u$ denotes the concentration of ions. Notice that $\bigD$ appears
on both sides of \eqref{eq:intro_lim_cond} so that the left hand side
equals the flux at the boundary.

However in \cite{Andreucci:2012} we considered the case where no
selection is present, and therefore diffusivity is a constant; this is
for example the case when looking at the flux of Sodium ions through
Potassium channels. When considering flux of $K^{+}$ ions through the
same channels, a variable diffusivity is to be taken into
account. Thus we model the phenomenon by using the Fokker-Planck
Equation $\unk_t- \Lapl(\Diffu \unk)=0$, where we let $\Diffu=\smlD$
in a suitably chosen neighborood of the pores, and $\smlD$ is a
vanishingly small value, in the asymptotic or homogenization limit
described above.  We obtain in the limit an asymptotic boundary
condition of the type \eqref{eq:intro_lim_cond}, but in this case the
function $\varphi$ also depends on the asymptotics of $\smlD$. We will
show that two different asymptotic standards are admissible, in order
to obtain the limiting interface condition
\eqref{eq:intro_lim_cond}. They are discriminated by the limiting
behavior of the ratio ${\Mpwid}/{\sqrt{\smlD\Mopen}}$ according to
the two cases
\begin{alignat}2
 \label{eq:main_3d_fast}
\lim_{\Mwid,\Mper \to 0}
\frac{\Mpwid}{\sqrt{\smlD\Mopen}}
&=
+\oo
\,,
&\qquad&
\text{which we call the case of  \textem{fast pores},}
\\
\label{eq:main_3d_small}
\lim_{\Mwid,\Mper \to 0}
\frac{\Mpwid}{\sqrt{\smlD\Mopen}}
&\le
\pwido
\,,
&\qquad&
\text{which we call the case of \textem{small pores};}
\end{alignat}
here $\pwido$ is a positive constant.  We will see that the
introduction of selection, that is of the asymptotically vanishing
diffusivity $\smlD$, yields a flux enhancement only in the case of
fast pores. The latter case was introduced in \cite{Andreucci:2012}
and tested numerically in \cite{Andreucci:2013b}, and is specific to
evolutive problems, while the small pores behavior is connected with
the stationary case considered for example in
\cite{Sanchez:1982,Murat:1985,Friedman:1995,Cioranescu:2008,Peter:2007,Ansini:2004},
but it does appear in evolutive problems as well. A relevant
difference between the two cases from the point of view of selectivity
will be outlined in Subsection~\ref{ss:discussion}.

Section~\ref{s:mathmod} is devoted to the theoretical analysis of the
diffusion problem. Subsection~\ref{s:N=1} prepares the way to Sections
\ref{s:rw} and \ref{s:discu} where the problem will be attacked via a
stochastic discrete space model.  This approach is strictly related to
that adopted in \cite{Andreucci:2011,Andreucci:2013a,Andreucci:2013b},
in connection with ion currents, and in
\cite{cirillomuntean:2012,cirillomuntean:2013}, in connection with
crowd dynamics.  We note, however, that in the model proposed and
studied in \cite{Andreucci:2011,Andreucci:2013a} the open/close cycles
of the pore are not prescribed a priori on a deterministic schedule.
There gating is realized in a stochastic fashion as a result of a
stochastic flipping of the pore between a low and a high affinity
state.

\section{Statement of the problem and main results}
\label{s:mathmod}

\subsection{Geometry and alternating pores}
\label{ss:mathmod_geom}
The quantities $\Mwid$, $\Mpwid$, $\Mopen$ $\smlD$ are 
defined as functions of $\Mper$, vanishing as $\Mper\to 0$
\begin{equation*}
  \label{eq:par_tau_dep_ii}
  \Mwid
  =
  \Mwid(\Mper)
  \to
  0
  \,,
  \qquad
  \Mpwid
  =
  \Mpwid(\Mper)
  \to
  0
  \,,
  \qquad
  \Mopen
  =
  \Mopen(\Mper)
  \to
  0
  \,,
  \qquad
  \smlD
  =
  \smlD(\Mper)
  \to
  0
  \,.
\end{equation*}
For reasons of technical simplicity we choose
\begin{equation}
 \label{eq:mathmod_set_hyp_iii}
\Oset
=
(-\Mbor, 
\Mbor)^{\SpDim-1}
\times
(0,\Mbor)
\,,
\qquad
\inter
=
[
-\Mbor
, 
\Mbor
]^{\SpDim-1}
\times
\{
\Mbor
\}
\,,
\end{equation}
for a given $\Mbor>0$. We model the pores as a subset $\Mpores$ of
$\inter$, i.e.,
\begin{equation}
  \label{eq:mathmod_pores_hyp_i}
  \Mpores
  =
  \bigcup_{i=1}^{\mw}
  \Mpore
  \,,
  \qquad
  \Mpore:=\Mpwid\basepore+\Mpoints
  \,,
\end{equation}
where $\mw$ is the number of pores, $\Mpwid>0$, the $\Mpoints$ are
points of $\inter$, $\basepore$ is an open set in $\R^{\SpDim-1}$ such
that $\partial \basepore \in C^3$ and, for a given
$\delta_{0}\in(0,1)$
\begin{equation}
\label{eq:mathmod_pores_hyp_ii}
  (-\delta_0,\delta_0)^{\SpDim-1}
  \subset
  \basepore
  \subset
  (-1,1)^{\SpDim-1}
\,.
\end{equation}
We define the total open phase and the union of the open pores as
\begin{equation}
  \label{eq:mathmod_open_hyp_i}
  \openset
  =
  \bigcup_{j=0}^{\mt-1}
  [j\Mper, j\Mper + \Mopen)
  \,,
  \qquad
  \openpores
  =
  \bigcup_{i=1}^{\mw}
  \Mpore
  \times
  \openset
  \,.
\end{equation}
Here $\Mopen>0$ is the opening interval in each cycle, $\Mper>0$
is the period of the cycle and $\mt$ is the total number of the
cycles, related by
\begin{equation}
  \label{eq:wdef_ass_iii}
  \Mper
  =
  \frac{\maxT}{\mt}
  \,,
  \qquad
  \mt
  \in 
  \N
  \,,
\end{equation}
so that $\mt\to\oo$ as $\Mper\to 0$.
The lenght $\Mwid$ satisfies the 
following requirements. Define
the domains $\Mcell(\Mpoints):=\Mcell(0)+z_i^{\Mwid}$, where
\begin{equation*}
  \Mcell(0)
  =
  (-\Mwid,\Mwid)^{\SpDim-1}
\times
(-\Mwid,0)
\,.
\end{equation*}
Then we assume that
\begin{equation}
 \label{eq:wdef_ass_i_bis}
\Mcell(\Mpoints)
\subset 
\Oset
\,;
\qquad
\Mcelltwo(\Mpoints)
\cap
 \Mcelltwo(\Mpointsj)
= 
\emptyset
\,,
\quad
\text{for any $i\neq m$}
\,.
\end{equation}
In addition, we stipulate the existence of a function 
$\mesfun(x)\in L^{\oo}(\inter)$ such that 
$\mesfun(x)\ge 0$, $\mesfun(x)\not\equiv 0$ and
\begin{equation}
 \label{eq:wdef_ass_ii}
\lim_{\Mper\to 0}
\sum_{i=1}^{\mw}
\sum_{j=0}^{\mt-1}
\Mper
\Mwid^{\SpDim-1}
\phj(\Mpoints,j\Mper)
=
\int_0^{\maxT}
\int_{\inter}
\mesfun(x)
\phj(x,t)
\di s
\di t
\,,
\end{equation}
for any $\phj\in C(\inter\times [0,\maxT])$.
Then taking $\phj \equiv 1$ in \eqref{eq:wdef_ass_ii} we get
\begin{equation}
 \label{eq:wdef_ass_v}
\mw
\sim
\frac{\mesfuno}{\Mwid^{\SpDim-1}}
\,,
\qquad
\text{as $\Mwid \to 0$}
\,,
\qquad
\text{where 
$\mesfuno=\int_{\inter} \mesfun(x) \di s>0 $.} 
\end{equation}
The function $\mesfun$ measures the
density of pores on $\inter$.

Then the diffusivity for Potassium ions is given by
\begin{equation}
 \label{eq:N_D_definition}
 \Diffu_K(x)
 =
 \begin{cases}
  \smlD
  \,,
  \quad&
  x\in \Osett
  \\
  \bigD
  \,,
  \quad&
  x\in \Oset\setminus \Osett
 \end{cases}
\,,
\quad
\text{where}
\quad
 \Osett=\bigcup_{i=1}^{\mw}
 \Mcell(\Mpoints)
 \,,
\end{equation}
$0<\smlD(\Mper)<\bigD$ and $\bigD$ is a constant. We understand that
$\smlD(\Mper)\to0$ as $\Mper\to0$ unless otherwise noted (as in
Subsection~\ref{s:N=1}). In turn the diffusivity for Sodium is constant
and, given the theoretical character of our analysis, denoted for
simplicity of comparison with the same symbol $\bigD$:
\begin{equation}
 \label{eq:N_D_definition_Na}
 \Diffu_{Na}(x)
 =
  \constD
  \,,
  \quad
  x\in \Oset
  \,.
\end{equation}

\subsection{Formulation of the approximating problem}
\label{ss:mathmod_form}
For any set $A\subset \R^{\SpDim}$ we use below the notation $\bdrp{A}
= \partial{A}\cap\{\xN<\Mbor\}$.  For any function $F(x,t)$, with
$x\in \Oset$, we will denote with $[F]$ the jump across $\bdrp\Osett$
\begin{equation*}
  [F]
  \defeq
  \exttrace{F}
  -
  \inttrace{F}
  \,,
  \qquad
  \inttrace{F}
  \defeq
  \text{trace of $F|_{\Osett}$ on $\bdrp\Osett$,}
  \quad 
  \exttrace{F}
  \defeq
  \text{trace of $F|_{\Oset\setminus\Osett}$ on $\bdrp\Osett$.}
\end{equation*}
We consider the problem for the concentration $\ut\ge 0$
\begin{alignat}2
\label{eq:N_prb1} 
\ut_{t}
- 
\Lapl(\Diffu\ut)
&=
0
\,,
 \qquad  
 (x,t)\in \Oset\times[0,\maxT]
 \,,
 \\
\label{eq:N_open_pore}
 \ut 
 &=
 0
  \,, 
  \qquad 
  (x,t)
  \in
  \openpores
  \,,
  \\
\label{eq:N_closed_pore} 
\grad(\Diffu\ut)
\cdot
\outnormal
&=0 
\,, 
\qquad 
(x,t)
\in
\{\inter\times[0,\maxT]\}
\setminus
\openpores
\,,
\\
\label{eq:N_ext_bc} 
\grad(\Diffu\ut)
\cdot
\outnormal
&=
0\,,  
\qquad 
(x,t)
\in
\bdrp
\Oset
\times
[0,\maxT]
\\
\label{eq:N_jump}
[\Diffu \ut]=[\grad(\Diffu \ut)\cdot \nu]
&=
0
\,,
\qquad
(x,t)\in \bdrp\Osett \times[0,\maxT]
\,,
\\
\label{eq:N_initial} 
\ut(x,0)
&=
\unk_0(x)
\,, 
\qquad 
x\in \Oset
\,.
\end{alignat}
Here $\outnormal$ is the outer normal to $\Oset$, $\nu$ is the outer
normal to $\Osett$ and we suppose that $\unk_0 \in L^{\oo}(\Oset)$.
In the case of Sodium the diffusivity is constant in the whole domain
so that \eqref{eq:N_prb1} is essentially the heat equation, and
\eqref{eq:N_jump} implies continuity of $\ut$ and of the flux through
$\bdrp\Osett$. This problem has been already studied in detail in
\cite{Andreucci:2012}; actually the boundary condition
\eqref{eq:N_ext_bc} is replaced there with vanishing Dirichlet data,
but this doesn't affect the well posedness of the model and its
asymptotics.  Instead, in the case of Potassium, condition
\eqref{eq:N_jump} implies the discontinuity of the unknown $\ut$
across $\bdrp\Osett$, namely
\begin{equation}
  \label{eq:jump}
 \inttrace{\ut}
 =
 \frac{\bigD}{\smlD}
 \exttrace{\ut}
 \,.
\end{equation}
Thus $\ut|_{\Osett}$ can not be bounded as $\Mper\to 0$.  But even in
the case of Potassium we can prove the existence of a unique weak
solution $\ut$ for problem \eqref{eq:N_prb1}-\eqref{eq:N_initial} in
the space $\solspace_K$ defined by
\begin{equation}
  \label{eq:mathmod_solspace} 
  \sqrt{\Diffu}\ut
  \in 
  C
  \left
  (0,\maxT,L^2(\Oset)\right)
  \,,
  \quad
  \Diffu\ut
  \in
  L^2
  \left(
  0,\maxT,
  {H}^1(\Oset)
  \right)
  \,,
  \quad
  \ut|_{\openpores}=0
  \,.
\end{equation}
The proof is standard, see however \cite{Andreucci:2012, thesis}.  For
this solution we prove the following, less standard, maximum
principle.
\begin{proposition}
  \label{p:Pmax}
  The solution $\ut$ to \eqref{eq:N_prb1}-\eqref{eq:N_initial} in the
  case of Potassium satisfies
  \begin{alignat}2
    \label{eq:N_max_result}
    0&\le
    {\Diffu(x)\ut(x,t)}
    \le 
    \bigD \Norma{\unk_0}{\oo}
    \,,
    &\qquad &
    (x,t)\in\Oset\times[0,\maxT]
    \,,
    \\
    \label{eq:N_max_result_b}
    0&\le
    {\ut(x,t)}
    \le 
    \Norma{\unk_0}{\oo}
    \,,
    &\quad &
    (x,t)\in(\Oset\setminus \Osett)\times[0,\maxT]
    \,.
  \end{alignat}
\end{proposition}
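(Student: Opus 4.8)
The plan is to pass to the rescaled unknown $v\defeq\Diffu\ut$ and to prove a weak maximum principle for it by Stampacchia truncation. Because $\Diffu$ is independent of time, \eqref{eq:N_prb1} is equivalent to $v_t=\Diffu\Lapl v$, while the transmission conditions \eqref{eq:N_jump} state exactly that $v$ and its normal flux are continuous across $\bdrp\Osett$. Thus $v$ is a single globally $H^1$ function — indeed $v=\Diffu\ut\in L^2(0,\maxT,H^1(\Oset))$ by \eqref{eq:mathmod_solspace} — and the interface $\bdrp\Osett$ contributes no extra term to the weak formulation; the discontinuity of $\ut$ is entirely absorbed by this change of variable. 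The membership $\sqrt{\Diffu}\ut\in C(0,\maxT,L^2(\Oset))$ in \eqref{eq:mathmod_solspace} is precisely the continuity of $t\mapsto\int_\Oset v^2/\Diffu\di x$, and this weighted energy is what I will monitor.

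For the upper bound set $M\defeq\bigD\Norma{\unk_0}{\oo}$ and test the weak formulation of \eqref{eq:N_prb1}--\eqref{eq:N_initial}, namely $\int_\Oset\ut_t\phi\di x+\int_\Oset\grad(\Diffu\ut)\cdot\grad\phi\di x=0$ for $\phi$ vanishing on $\openpores$, with the choice $\phi\defeq(v-M)_+$. This is admissible: $\phi\in L^2(0,\maxT,H^1(\Oset))$ and it vanishes on the open pores $\openpores$, where $v=\Diffu\ut=0$ by \eqref{eq:N_open_pore} and $M\ge0$, whereas on the remaining boundary the no-flux conditions \eqref{eq:N_closed_pore}--\eqref{eq:N_ext_bc} kill the boundary integral. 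Writing $\ut_t=v_t/\Diffu$, the parabolic term becomes $\frac{d}{dt}\int_\Oset\frac{(v-M)_+^2}{2\Diffu}\di x$ and the elliptic term equals $\int_\Oset\abs{\grad(v-M)_+}^2\di x\ge0$, so that $\frac{d}{dt}\int_\Oset\frac{(v-M)_+^2}{2\Diffu}\di x\le0$. Since $v(\cdot,0)=\Diffu\unk_0\le\bigD\Norma{\unk_0}{\oo}=M$ — using $\unk_0\ge0$, inherent in its reading as a concentration, together with $\Diffu\le\bigD$ — this weighted energy vanishes at $t=0$; being nonnegative and nonincreasing, it vanishes for every $t$, whence $v\le M$.

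The lower bound is entirely symmetric: testing with $\phi\defeq v_-$, where $v_-\defeq\max(-v,0)$ (again admissible because $v=0$ on $\openpores$), gives $\frac{d}{dt}\int_\Oset\frac{v_-^2}{2\Diffu}\di x=-\int_\Oset\abs{\grad v_-}^2\di x\le0$, and $v(\cdot,0)=\Diffu\unk_0\ge0$ forces $v_-\equiv0$, that is $v\ge0$. Combining the two bounds yields $0\le\Diffu\ut\le\bigD\Norma{\unk_0}{\oo}$, which is \eqref{eq:N_max_result}. Finally, on $\Oset\setminus\Osett$ one has $\Diffu=\bigD$, so dividing through by $\bigD$ produces \eqref{eq:N_max_result_b}.

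The one genuinely delicate point, and the step I expect to demand the most care, is the rigorous justification of $\int_\Oset\frac{v_t}{\Diffu}(v-M)_+\di x=\frac{d}{dt}\int_\Oset\frac{(v-M)_+^2}{2\Diffu}\di x$, since $\ut$ need not be bounded on $\Osett$ as $\Mper\to0$ and $v_t$ lives only in a dual space in time. I would proceed in the classical way: replace $v_t$ by the Steklov average $v_h=h^{-1}\int_t^{t+h}v\di s$, perform the truncation computation at the regularized level where the chain rule for $(\cdot-M)_+$ and $v_-$ is legitimate, and then let $h\to0$ using the continuity of $t\mapsto\int_\Oset v^2/\Diffu\di x$; the compatibility of these truncations with the Bochner time derivative is the standard Lions/Showalter lemma. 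The piecewise constant, discontinuous diffusivity $\Diffu$ causes no further trouble, precisely because the entire argument is run for the single $H^1$ function $v$ against the weighted norm $\int_\Oset v^2/\Diffu\di x$.
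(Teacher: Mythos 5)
Your proof is correct and takes essentially the same route as the paper: there one likewise sets $\maxfun=\Diffu\ut$ and tests the equation satisfied by $\maxfun$ with the truncation $(\maxfun-\maxconst)_{+}$, $\maxconst=\bigD\Norma{\unk_0}{\oo}$, obtaining decay of the $1/\Diffu$-weighted energy and concluding from its vanishing at $t=0$, with \eqref{eq:N_max_result_b} following by division by $\bigD$ on $\Oset\setminus\Osett$. The only differences are presentational: you spell out the nonnegativity half (testing with $v_{-}$) and the Steklov-averaging justification of the chain rule, both of which the paper leaves implicit.
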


\begin{proof}
  Let us consider the function $\maxfun(x,t)=\Diffu(x) \ut(x,t)$.
  Let $\maxconst>0$ be a constant to be chosen.
  Using $(\maxfun-\maxconst)_{+}$ as a test function in the weak
  formulation of the problem solved by $\maxfun$ we get
  \begin{equation}
    \label{eq:N_max_princ}
    0
    =
    \frac{1}{\smlD}
    \int_0^{\tfix}
    \int_{\Osett}
    \maxfun_t
    (\maxfun-\maxconst)_{+}
    \di x
    \di t
    +
    \frac{1}{\bigD}
    \int_0^{\tfix}
    \int_{\Oset\setminus \Osett}
    \maxfun_t
    (\maxfun-\maxconst)_{+}
    \di x
    \di t
    -
    \int_0^{\tfix}
    \int_{\Oset}
    \Lapl\maxfun
    (\maxfun-\maxconst)_{+}
    \di x
    \di t
    \,,
  \end{equation}
  and then, setting $\maxconst=\bigD \Norma{\unk_0}{\oo}$ 
  \begin{multline}
    \label{eq:N_max_princ_ii}
    \sup_{0<\tfix<\maxT}
    \frac{1}{\smlD}
    \int_{\Osett}
    (\maxfun-\maxconst)_{+}^2
    (x,\tfix)
    \di x
    +
    \sup_{0<\tfix<\maxT}
    \frac{1}{\bigD}
    \int_{\Oset\setminus \Osett}
    (\maxfun-\maxconst)_{+}^2
    (x,\tfix)
    \di x
    \le
    \\
    \frac{1}{\smlD}
    \int_{\Osett}
    (\maxfun-\maxconst)_{+}^2
    (x,0)
    \di x
    +
    \frac{1}{\bigD}
    \int_{\Oset\setminus\Osett}
    (\maxfun-\maxconst)_{+}^2
    (x,0)
    \di x
    =
    0
    \,.
  \end{multline}
  Hence \eqref{eq:N_max_result} follows, and \eqref{eq:N_max_result_b}
  is an immediate consequence of \eqref{eq:N_max_result}.
\end{proof}

\subsection{Fast and small pores}
\label{ss:main_3d_fs}

The limiting behaviour of the problem
\eqref{eq:N_prb1}--\eqref{eq:N_initial} as $\Mper\to0$ depends sharply
on the relative sizes of the quantities introduced above.
Specifically we need define two possible cases.

Considering the model for Potassium, the cases of \textsl{fast pores}
and \textsl{small pores} are defined respectively by the assumptions
\eqref{eq:main_3d_fast} and \eqref{eq:main_3d_small}.
We also need assume that as $\Mper\to0$
\begin{alignat}2
  \label{eq:main_L2_grd_cfst}
  \fstlK
  \defeq
  \frac{\sqrt{\Mopen}}{\sqrt{\smlD}\Mper}
  \frac{\Mpwid^{\SpDim-1}}{\Mwid^{\SpDim-1}}
  &\to
  \fstconstK
  \,,
  &\qquad&
  \text{in the case of fast pores},
  \\
  \label{eq:main_L2_grd_csml}
  \smllK
  \defeq
  \frac{\Mopen}{\Mper}
  \frac{\Mpwid^{\SpDim-2}}{\Mwid^{\SpDim-1}}
  &\to
  \smlconstK
  \,,
  &\qquad&
  \text{in the case of small pores},
\end{alignat}
where $\fstconstK$, $\smlconstK\in(0,+\oo)$ unless otherwise noted.

Considering Sodium, the cases of \textsl{fast pores} and \textsl{small
  pores} are defined again by \eqref{eq:main_3d_fast} and
\eqref{eq:main_3d_small} \textem{where we formally let} $\smlD=1$.
In each case, we need assume that as $\Mper\to0$
\begin{alignat}2
  \label{eq:main_L2_grd_cfst_Na}
  \fstlNa
  \defeq
  \frac{\sqrt{\Mopen}}{\Mper}
  \frac{\Mpwid^{\SpDim-1}}{\Mwid^{\SpDim-1}}
  &\to
  \fstconstNa
  \,,
  &\qquad&
  \text{in the case of fast pores},
  \\
  \label{eq:main_L2_grd_csml_Na}
  \smllNa
  \defeq
  \frac{\Mopen}{\Mper}
  \frac{\Mpwid^{\SpDim-2}}{\Mwid^{\SpDim-1}}
  &\to
  \smlconstNa
  \,,
  &\qquad&
  \text{in the case of small pores},
\end{alignat}
where $\fstconstNa$, $\smlconstNa\in(0,+\oo)$ unless otherwise noted.

\subsection{Formulation of the limiting problem}
\label{ss:main_3d_limit}

In this Subsection, we always assume that for a constant $\constcp>0$,
\begin{equation}
  \label{eq:limit_hyp}
  \Mwid
  =
  \constcp\sqrt{\smlD\Mper}
  \,,
  \quad
  \Mper>0
  \,;
  \qquad
  \smlD(\tau)\Mper^{-1}
  \to
  +\infty
  \,,
  \quad
  \Mper\to 0
  \,.
\end{equation}
Under the stipulations above, the solution to
\eqref{eq:N_prb1}--\eqref{eq:N_initial} is proven to approximate
as $\Mper\to 0$ the solution to
\begin{alignat}2
 \label{eq:fin_limprb_pde1}
  \unk_{t}
  -
  \Lapl
  (\bigD
  \unk)
  &=
  0
  \,,
  &\qquad&
  \text{in $\Oset\times (0,\maxT)$,}
  \\
  \label{eq:fin_limprb_bdr}
  \grad(\bigD\unk)\cdot \outnormal
  &=
  0
  \,,
  &\qquad&
  \text{on $\bdrp{\Oset}\times (0,\maxT)$}
  \,,
  \\
\label{eq:fin_limprb_inter_i_}
  \grad(\bigD\unk)\cdot \outnormal
  &=
  -
  \limconst \mesfun(x)\bigD
  \unk
  \,,
  &\qquad&
  \text{on $\inter\times (0,\maxT)$}
  \,,
  \\
  \label{eq:fin_limprb_init}
  \unk
  (x,0)
  &=
  \unk_{0}
  (x)
  \,,
  &\qquad&
  \text{in $\Oset$}
  \,,
\end{alignat}
where the constant $\limconst$ is defined in
Theorem~\ref{t:main_3d_conv} below. 

\begin{theorem}
\label{t:main_3d_conv}
As $\tau\to0$ the solution to \eqref{eq:N_prb1}--\eqref{eq:N_initial}
converges to the solution of
\eqref{eq:fin_limprb_pde1}--\eqref{eq:fin_limprb_init}, in the sense
of $\Lsp{2}{\Oset\times(0,\maxT)}$, provided one among
\eqref{eq:main_3d_fast}--\eqref{eq:main_3d_small} is in force, and the
corresponding assumption \eqref{eq:main_L2_grd_cfst} or
\eqref{eq:main_L2_grd_csml} also holds true.

The constant appearing in \eqref{eq:fin_limprb_inter_i_} is defined
by
\begin{equation}
 \label{eq:main_cont_lim_const}
\limconst
=
\begin{cases}
 \frac{2}{\sqrt{\pi}}
\meas{\basepore}_{\SpDim-1}
 \fstconstK
\,,
\qquad
&
\text{in the case of fast pores}
\,,
\\
 \Lflxs
 \smlconstK
\,,
\qquad
&
\text{in the case of small pores,}
\end{cases}
\end{equation}
where $\Lflxs$ is a positive constant related to the geometry
of the pore $\basepore$.
\end{theorem}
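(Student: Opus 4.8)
The plan is to recast the problem in terms of the continuous quantity $\Diffu\ut$, extract a weak limit, and then identify the effective interface condition \eqref{eq:fin_limprb_inter_i_} by an energy/corrector argument, the two values of $\limconst$ in \eqref{eq:main_cont_lim_const} arising from two different local problems near a single pore.

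\textbf{Uniform estimates and compactness.} First I would exploit Proposition~\ref{p:Pmax}, which already gives $0\le\Diffu\ut\le\bigD\norma{\unk_0}{\oo}$, so that $\Diffu\ut$ is bounded in $L^{\oo}$ uniformly in $\Mper$. Working with $\Diffu\ut$ is natural because, by \eqref{eq:jump}, it is continuous across $\bdrp\Osett$ whereas $\ut$ is not. Testing the weak form of \eqref{eq:N_prb1} with $\Diffu\ut$ and using that all boundary and interface contributions vanish ($\Diffu\ut=0$ on the open pores, Neumann data on the rest of $\inter$ and on $\bdrp\Oset$, and the cancellation of the two jump conditions \eqref{eq:N_jump} on $\bdrp\Osett$) yields the weighted energy identity
\begin{equation*}
\frac12\frac{d}{dt}\int_\Oset\frac1\Diffu(\Diffu\ut)^2\di x
+\int_\Oset\abs{\grad(\Diffu\ut)}^2\di x=0 .
\end{equation*}
Since the initial term is bounded uniformly in $\Mper$, this makes $\grad(\Diffu\ut)$ bounded in $L^2(\Oset\times(0,\maxT))$, hence $\Diffu\ut$ bounded in $L^2(0,\maxT,H^1(\Oset))$. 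Bounding the time increments of $\Diffu\ut$ through \eqref{eq:N_prb1} and invoking a Simon/Aubin--Lions compactness argument then produces a subsequence with $\Diffu\ut\to\bigD\unk$ strongly in $L^2(\Oset\times(0,\maxT))$ and weakly in $L^2(0,\maxT,H^1)$; since $\Diffu=\bigD$ off the vanishing set $\Osett$, the limit, written $\bigD\unk$, defines the macroscopic concentration $\unk$.

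\textbf{Identification of the limit by correctors.} Passing to the limit in the bulk and on $\bdrp\Oset$ directly yields \eqref{eq:fin_limprb_pde1}, \eqref{eq:fin_limprb_bdr} and the initial datum \eqref{eq:fin_limprb_init}; the crux is the condition on $\inter$. Here I would use the oscillating test function method: construct a family of correctors $\Mfun$ that vanish on the open pores $\openpores$, so that $\phj\Mfun$ is an admissible test function for every fixed smooth $\phj$, that satisfy $\Mfun\to1$ almost everywhere, and that are tuned to the local pore problem. Inserting $\phj\Mfun$ into the weak formulation and splitting $\grad(\phj\Mfun)=\Mfun\grad\phj+\phj\grad\Mfun$, the terms carrying $\Mfun\grad\phj$ together with the time term converge to the bulk part of the weak form of \eqref{eq:fin_limprb_pde1}--\eqref{eq:fin_limprb_init}, while the concentrated term $\int_0^\maxT\int_\Oset\grad(\Diffu\ut)\cdot\phj\,\grad\Mfun\di x\di t$, supported near the open pores, must be shown to converge to $\int_0^\maxT\int_\inter\limconst\,\mesfun(x)\,\bigD\unk\,\phj\di s\di t$, that is, to the interface contribution encoding \eqref{eq:fin_limprb_inter_i_}. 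The value of $\limconst$ is read off from the flux captured by one open pore during one open subinterval, summed over all pores and cycles through the measure-convergence hypothesis \eqref{eq:wdef_ass_ii}. In the case of \emph{fast pores} \eqref{eq:main_3d_fast}, the diffusion length $\sqrt{\smlD\Mopen}$ is negligible against the pore diameter $\Mpwid$, so near the mouth the relevant local problem is the one-dimensional heat equation in the inward normal variable with homogeneous Dirichlet datum, whose self-similar solution involves $\erf$; the time-integrated normal flux carries the factor $\tfrac{2}{\sqrt\pi}$, and multiplying by the pore cross-section $\meas{\basepore}_{\SpDim-1}\Mpwid^{\SpDim-1}$ and regrouping the $\Mper$-powers exactly as in \eqref{eq:main_L2_grd_cfst} gives $\limconst=\tfrac{2}{\sqrt\pi}\meas{\basepore}_{\SpDim-1}\fstconstK$. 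In the case of \emph{small pores} \eqref{eq:main_3d_small}, the low-diffusivity region is quasi-stationary over an open phase, the relevant local problem is the stationary capacity problem $\Lapl\Mfun=0$ with $\Mfun=0$ on the rescaled pore $\basepore$ and $\Mfun\to1$ far away, its flux scales like $\Mpwid^{\SpDim-2}$ times a purely geometric capacity constant $\Lflxs$, and regrouping as in \eqref{eq:main_L2_grd_csml} gives $\limconst=\Lflxs\smlconstK$; this matches \eqref{eq:main_cont_lim_const}.

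\textbf{Main obstacle.} The hard part is precisely the construction of the correctors $\Mfun$ and the proof that the concentrated flux term converges to the interface integral with the correct constant. Three difficulties must be controlled simultaneously: (i) matching the single-pore local problem to the two asymptotic regimes and extracting the right constant; (ii) the time-periodic gating, which keeps the corrector and its flux active only on the open subintervals, so that the summation over cycles must be organized through the centers $\Mpoints$ and times $j\Mper$ to fall under \eqref{eq:wdef_ass_ii}; and (iii) the blow-up of $\ut$ inside $\Osett$ forced by \eqref{eq:jump}, which is tamed by working throughout with the continuous product $\Diffu\ut$ and the weighted energy above. Once these are settled, the limit $\unk$ is a weak solution of \eqref{eq:fin_limprb_pde1}--\eqref{eq:fin_limprb_init}, and uniqueness for that problem upgrades the convergence of the subsequence to convergence of the whole family, as claimed.
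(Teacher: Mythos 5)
Your proposal is correct and takes essentially the same route as the paper: uniform energy estimates together with Proposition~\ref{p:Pmax} give compactness and the bulk relations \eqref{eq:fin_limprb_pde1}, \eqref{eq:fin_limprb_bdr}, \eqref{eq:fin_limprb_init}, and the interface condition \eqref{eq:fin_limprb_inter_i_} with the constant \eqref{eq:main_cont_lim_const} is then identified via the oscillating test function method, which is exactly the analysis the paper imports from \cite{Andreucci:2012}, noting (as you do) that the constancy of $\smlD$ on $\Osett$ lets that analysis carry over. Your points about working with the continuous quantity $\Diffu\ut$ to tame the jump \eqref{eq:jump}, and reading the two constants off the local erf-type problem (fast pores) and the capacity problem (small pores), agree with the paper's sketch.
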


The value of $\Lflxs$ can be found in \cite{Friedman:1995,
  Andreucci:2012}.  When the limits in \eqref{eq:main_L2_grd_cfst},
\eqref{eq:main_L2_grd_csml} are either zero or infinite, the limiting
boundary condition 
corresponds to the case of minimal (null) or maximal flux
respectively:

\begin{theorem}
\label{t:neu_bdr}
Assume that \eqref{eq:main_3d_fast}, respectively
\eqref{eq:main_3d_small} is in force.  Then as $\tau\to0$ the solution
to \eqref{eq:N_prb1}--\eqref{eq:N_initial} converges in the sense of
$\Lsp{2}{\Oset\times(0,\maxT)}$ to a function $\unk$ satisfying
\eqref{eq:fin_limprb_pde1}, \eqref{eq:fin_limprb_bdr},
\eqref{eq:fin_limprb_init} and
\begin{alignat}{2}
  \label{eq:fin_limprb_inter_i_neu}
  \grad(\bigD\unk)\cdot \outnormal
  &=
  0
  \,,
  \quad
  \text{on $\inter\times (0,\maxT)$,}
  &\qquad&
  \text{provided $\fstconstK=0$, resp.\ $\smlconstK=0$;}
  \\
 \label{eq:fin_limprb_inter_i_dir}
\mesfun(x)
\unk
  &=
0
\,,
\quad
  \text{on $\inter\times (0,\maxT)$,}
  &\qquad&
  \text{provided $\fstconstK=+\oo$, resp.\ $\smlconstK=+\oo$.}
\end{alignat}
\end{theorem}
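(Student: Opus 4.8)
The plan is to run, for both regimes, the same compactness argument that underlies Theorem~\ref{t:main_3d_conv}, and to localize the dependence on $\fstconstK$ (resp.\ $\smlconstK$) entirely in the identification of the condition on $\inter$. Since by \eqref{eq:main_cont_lim_const} the coefficient $\limconst$ in \eqref{eq:fin_limprb_inter_i_} is linear in $\fstconstK$ (resp.\ $\smlconstK$), one expects formally $\limconst=0$ in the vanishing regime, giving the homogeneous Neumann condition \eqref{eq:fin_limprb_inter_i_neu}, and a diverging penalization in the opposite regime, forcing $\mesfun\unk=0$, i.e.\ \eqref{eq:fin_limprb_inter_i_dir}. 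Uniqueness for the two limit problems then promotes subsequential convergence to convergence of the whole family.

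First I would collect the $\Mper$-uniform bounds. Proposition~\ref{p:Pmax} gives $0\le\Diffu\ut\le\bigD\norma{\unk_0}{\oo}$ on $\Oset$ and $0\le\ut\le\norma{\unk_0}{\oo}$ on $\Oset\setminus\Osett$. Using $\Diffu\ut$ as a test function---admissible because it vanishes on $\openpores$ by \eqref{eq:N_open_pore} and is continuous across $\bdrp\Osett$ by \eqref{eq:N_jump}---yields the energy identity
\begin{equation*}
  \frac12\int_\Oset(\sqrt{\Diffu}\,\ut)^2(\cdot,\tfix)\di x
  +\int_0^{\tfix}\!\int_\Oset\abs{\grad(\Diffu\ut)}^2\di x\di t
  =\frac12\int_\Oset(\sqrt{\Diffu}\,\unk_0)^2\di x\,.
\end{equation*}
As $\meas{\Osett}\to0$ the right-hand side stays bounded by $\tfrac12\bigD\norma{\unk_0}{2}^2+\smallO(1)$, so $\Diffu\ut$ is bounded in $\Lsp{2}{0,\maxT,H^1(\Oset)}$, while reading $\ut_t=\Lapl(\Diffu\ut)$ weakly bounds $\ut_t$ in $\Lsp{2}{0,\maxT,(H^1(\Oset))'}$. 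By Aubin--Lions I extract $\Diffu\ut\to\bigD\unk$ strongly in $\Lsp{2}{\Oset\times(0,\maxT)}$ and weakly in $\Lsp{2}{0,\maxT,H^1(\Oset)}$; since $\meas{\Osett}\to0$ this gives $\ut\to\unk$ in $\Lsp{2}{(\Oset\setminus\Osett)\times(0,\maxT)}$, the convergence asserted in Theorem~\ref{t:main_3d_conv}. Passing to the limit in the bulk and lateral terms of the weak formulation of \eqref{eq:N_prb1}--\eqref{eq:N_initial} identifies \eqref{eq:fin_limprb_pde1}, \eqref{eq:fin_limprb_bdr} and \eqref{eq:fin_limprb_init}.

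In the Neumann case ($\fstconstK=0$, resp.\ $\smlconstK=0$) I would reuse the oscillating test functions of Theorem~\ref{t:main_3d_conv}: for smooth $\vp$ one sets $\vp_\Mper=\vp(1-\Mfun)$, with $\Mfun$ the capacitary corrector of the open pores, which vanishes on $\openpores$ and is therefore admissible. Inserting $\vp_\Mper$ in the weak formulation, the sole surviving contribution on $\inter$ is the strange term, whose coefficient is $\fstlK$ (resp.\ $\smllK$) times the geometric factor $\tfrac{2}{\sqrt{\pi}}\meas{\basepore}_{\SpDim-1}$ (resp.\ $\Lflxs$), converted from a sum over pore centres to $\int_\inter\mesfun(\cdot)$ through \eqref{eq:wdef_ass_ii}. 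By \eqref{eq:main_L2_grd_cfst} (resp.\ \eqref{eq:main_L2_grd_csml}) this coefficient tends to $\fstconstK=0$ (resp.\ $\smlconstK=0$), so the limit weak formulation carries no integral over $\inter$; that is exactly \eqref{eq:fin_limprb_inter_i_neu}.

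In the Dirichlet case ($\fstconstK=+\oo$, resp.\ $\smlconstK=+\oo$) the strange term cannot be passed to the limit, and I would instead force the weighted trace of $\unk$ to vanish. The same capacitary corrector now has diverging energy, $\int_0^\maxT\!\int\abs{\grad\Mfun}^2\sim\fstlK\to+\oo$ (resp.\ $\sim\smllK$); a matching capacitary lower bound for the Dirichlet energy near the pores turns the (uniformly bounded) left-hand side of the energy identity into the estimate $\fstlK\int_0^\maxT\!\int_\inter\mesfun\,(\Diffu\ut)^2\le C$ (resp.\ with $\smllK$). Hence the weighted boundary functional is $\bigO(\fstlK^{-1})\to0$, and by the lower semicontinuity of $v\mapsto\int_0^\maxT\!\int_\inter\mesfun\,v^2$ along the sequence $\Diffu\ut\to\bigD\unk$ (weak in $H^1$, strong in $L^2$, with compact trace) we obtain $\int_0^\maxT\!\int_\inter\mesfun\,(\bigD\unk)^2=0$, i.e.\ $\mesfun(x)\unk=0$ on $\inter\times(0,\maxT)$, which is \eqref{eq:fin_limprb_inter_i_dir}. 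The main obstacle in both regimes is precisely this boundary-layer bookkeeping: establishing, uniformly in $\Mper$, that the discrete capacity of the array of open pores---averaged over the open/closed cycle \eqref{eq:mathmod_open_hyp_i} and over the affinity region $\{\Diffu=\smlD\}$---scales like $\fstlK$ (resp.\ $\smllK$) with the stated geometric constant; the Dirichlet subcase carries the extra delicacy of converting the divergence of this capacity into the constraint $\mesfun\unk=0$ via the trace lower-semicontinuity above. The required corrector estimates are those of \cite{Friedman:1995,Andreucci:2012}.
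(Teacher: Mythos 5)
Your proposal is correct and follows essentially the same route as the paper: the paper's own proof (the sketch in Subsection~\ref{s:sketch}) is precisely this two-step scheme---uniform energy estimates plus the bounds of Proposition~\ref{p:Pmax} giving compactness and the standard identification of \eqref{eq:fin_limprb_pde1}, \eqref{eq:fin_limprb_bdr}, \eqref{eq:fin_limprb_init}, followed by identification of the condition on $\inter$ via the oscillating test function (corrector) analysis of \cite{Andreucci:2012}, which carries over unchanged since $\smlD$ is constant in $\Osett$. Your fleshing-out of the two degenerate regimes (vanishing strange-term coefficient for $\fstconstK=0$, resp.\ $\smlconstK=0$; diverging capacity together with weak lower semicontinuity of the weighted trace functional for the value $+\oo$) is exactly the content the paper delegates to that same corrector machinery of \cite{Friedman:1995,Andreucci:2012}, so the two arguments coincide in substance.
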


\subsubsection{Sketch of the proof of Theorems \ref{t:main_3d_conv},
  \ref{t:neu_bdr}}
\label{s:sketch}

Assuming essentially the same hypoteses of Theorems
\ref{t:main_3d_conv} and \ref{t:neu_bdr}, in \cite{Andreucci:2012} it
has been proven that in the case of Sodium problem
\eqref{eq:N_prb1}--\eqref{eq:N_initial} has a unique solution
converging in the sense of $L^2$ to the solution of
\eqref{eq:fin_limprb_pde1}--\eqref{eq:fin_limprb_init}, with
\begin{equation}
 \label{eq:main_cont_lim_const_Na}
\limconst
=
\begin{cases}
 \frac{2}{\sqrt{\pi}}
\meas{\basepore}_{\SpDim-1}
 \fstconstNa
 \frac{1}{\sqrt{\constD}}
\,,
\qquad
&
\text{in the case of fast pores}
\,,
\\
 \Lflxs
 \smlconstNa
\,,
\qquad
&
\text{in the case of small pores,}
\end{cases}
\end{equation}
where $\Lflxs$ is the same constant as in
\eqref{eq:main_cont_lim_const}.

Since the solution $\ut$ of problem 
\eqref{eq:N_prb1}--\eqref{eq:N_initial} 
satisfies energy estimates which are uniform with respect
to $\Mper$,
by extracting a subsequence if needed, and also using the bounds of
Proposition~\ref{p:Pmax}, we have
\begin{gather*}
  \ut 
  |_{x\in\Oset\setminus \Osett}
  \to\unk
  \,,
  \qquad
  \text{strongly in $L^{2}(\Oset\times(0,\maxT))$
  as $\Mper \to 0$;}
  \\
  \grad(\bigD\ut)
  |_{x\in\Oset\setminus \Osett}
  \to
  \grad(\bigD\unk)
  \,,
  \qquad
  \text{weakly in $L^{2}(\Oset\times(0,\maxT))$
  as $\Mper \to 0$.} 
\end{gather*}
Then it is easy to see that $\unk$ satisfies
\eqref{eq:fin_limprb_pde1}, \eqref{eq:fin_limprb_bdr},
\eqref{eq:fin_limprb_init} in a standard weak sense. This simple
compactness argument leaves completely unsolved the problem of
determining the limiting boundary condition satisfied for $x\in
\inter$. The latter condition may be identified as in
\cite{Andreucci:2012}, relying on a careful analysis of the behaviour
of a suitable oscillating test function defined in $\Osett$. Actually,
since in $\Osett$ the diffusivity $\smlD$ is constant, the analysis of
\cite{Andreucci:2012} carries through essentially without technical
changes.

However the introduction of a vanishing diffusivity $\smlD$
in our problem is important in the function $\fstlK$ approximating
$\fstconstK$, and in the assumptions on $\Mwid$ stipulated in Theorems
\ref{t:main_3d_conv} and \ref{t:neu_bdr}; the consequences of this
are in our opinion interesting and will be discussed below.

\subsubsection{Discussion: Potassium flux enhancement using Fokker-Planck equation}
  \label{ss:discussion}

  In this subsection we give an interpretation of the results stated
  above and show that using as a starting point the model in
  \cite{Andreucci:2012} modified as above by introducing a vanishingly
  small diffusivity in a neighborhood of the pores, we can mimic the
  selectivity mechanism present in many biological membranes. This
  effect also relies on the use of the Fokker-Planck equation, which
  implies the interface condition \eqref{eq:N_jump} and therefore the
  jump relation \eqref{eq:jump}, which is instrumental in the
  enhancement of the local concentration, and therefore of the
  outflux; see also \cite{Sattin:2004}.

  The Propositions below, together with Theorems \ref{t:main_3d_conv},
  \ref{t:neu_bdr}, show that, if Sodium and Potassium share a common
  set of parameters $\Mwid$, $\Mpwid$, $\Mopen$, in the case of fast
  pores for Potassium (that could correspond either to fast or small
  pores for Sodium), if the limiting boundary condition on $\inter$
  for Potassium [Sodium] is \eqref{eq:fin_limprb_inter_i_}, then for
  Sodium [Potassium] it is of Neumann [Dirichlet] type. Therefore, in
  this case we proved an enhanced asymptotic flux for Potassium with
  respect to Sodium.
  \\
  On the other hand in the case of small pores for both Potassium and
  Sodium, if the species share the same set of parameters, they also
  share the limiting behaviour of the boundary condition on $\inter$.
  The case of small pores is therefore not sensitive to the mechanism
  of selection we introduced for Potassium.
 
\begin{proposition}
 \label{pr:compare_fast}
 Assume
 \eqref{eq:main_3d_fast} and \eqref{eq:main_L2_grd_cfst} with
 $0<\fstconstK<+\oo$.  If either case of fast or small pores holds
 true for Sodium, then the corresponding limit relation in
 \eqref{eq:main_L2_grd_cfst_Na} or in \eqref{eq:main_L2_grd_csml_Na}
 is satisfied with $\fstconstNa=0$ or $\smlconstNa=0$.
 \\
 Conversely, assume \eqref{eq:main_3d_fast} but not
 \eqref{eq:main_L2_grd_cfst}. If either case of fast or small pores
 holds true for Sodium, and if the corresponding limit relation in
 \eqref{eq:main_L2_grd_cfst_Na} or in \eqref{eq:main_L2_grd_csml_Na}
 is satisfied with $0<\fstconstNa<+\oo$ or $0<\smlconstNa<+\oo$, then
 then the limit relation \eqref{eq:main_L2_grd_cfst} is satisfied with
 $\fstconstK=+\oo$.
\end{proposition}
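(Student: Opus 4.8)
The plan is to reduce the entire statement to two elementary algebraic identities relating the Sodium quantities $\fstlNa$, $\smllNa$ to the single Potassium quantity $\fstlK$, and then to read off each conclusion from the behaviour of two controlling limits: the vanishing of $\smlD$ (always in force for Potassium) and the fast--pore divergence $\Mpwid/\sqrt{\smlD\Mopen}\to+\oo$ granted by \eqref{eq:main_3d_fast}.

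First I would compare the definitions \eqref{eq:main_L2_grd_cfst}, \eqref{eq:main_L2_grd_cfst_Na} and \eqref{eq:main_L2_grd_csml_Na} termwise. The common geometric factor $\Mpwid^{\SpDim-1}/\Mwid^{\SpDim-1}$ cancels, and inspection of the surviving prefactors yields, for every $\Mper>0$,
\begin{equation*}
  \fstlNa
  =
  \sqrt{\smlD}\,\fstlK
  \,,
  \qquad
  \smllNa
  =
  \frac{\sqrt{\smlD\Mopen}}{\Mpwid}\,\fstlK
  =
  \Big(\frac{\Mpwid}{\sqrt{\smlD\Mopen}}\Big)^{-1}\fstlK
  \,.
\end{equation*}
These identities contain all the information needed; everything else is passing to the limit.

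For the direct part I would assume \eqref{eq:main_L2_grd_cfst} with $0<\fstconstK<+\oo$. Since $\smlD\to0$, the first identity gives $\fstlNa\to0$, so if fast pores hold for Sodium then necessarily $\fstconstNa=0$. For the small--pore alternative, the fast--pore hypothesis \eqref{eq:main_3d_fast} forces $\Mpwid/\sqrt{\smlD\Mopen}\to+\oo$, hence its reciprocal tends to $0$; the second identity then gives $\smllNa\to0$, so if small pores hold for Sodium then necessarily $\smlconstNa=0$.

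The converse part reads the same identities backwards. Now $\fstlK$ is not assumed to converge to a finite positive limit, but the Sodium data are prescribed. If fast pores hold for Sodium with $0<\fstconstNa<+\oo$, then solving $\fstlK=\fstlNa/\sqrt{\smlD}$ and using $\smlD\to0$ gives $\fstlK\to+\oo$. If instead small pores hold for Sodium with $0<\smlconstNa<+\oo$, then solving $\fstlK=(\Mpwid/\sqrt{\smlD\Mopen})\,\smllNa$ and using the fast--pore divergence again gives $\fstlK\to+\oo$; in both situations $\fstconstK=+\oo$. There is no real obstacle here --- the only point deserving a word is consistency: the derived divergence of $\fstlK$ automatically rules out the finite positive limit excluded by hypothesis, so $\fstconstK=+\oo$ is precisely the new information that the Sodium asymptotics force on the Potassium scaling.
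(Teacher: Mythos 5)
Your proposal is correct and is exactly the argument the paper intends: the paper itself only remarks that the proofs ``follow from some simple algebra and the definitions,'' and your two identities $\fstlNa=\sqrt{\smlD}\,\fstlK$ and $\smllNa=\bigl(\Mpwid/\sqrt{\smlD\Mopen}\bigr)^{-1}\fstlK$, combined with $\smlD\to0$ and the fast-pore divergence \eqref{eq:main_3d_fast}, are precisely that algebra, yielding all four conclusions correctly.
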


\begin{proposition}
 \label{pr:compare_small}
 Assume \eqref{eq:main_3d_small} and \eqref{eq:main_L2_grd_csml}
 with $0<\smlconstK<+\oo$.
 Then the case of small pores for Sodium holds true, and the limit
 relation \eqref{eq:main_L2_grd_csml_Na} is satisfied with
 $\smlconstNa=\smlconstK$.
 \\
 Conversely, assume \eqref{eq:main_3d_small}, but not
 \eqref{eq:main_L2_grd_csml}. If the case of small pores for Sodium
 holds true, as well as
 \eqref{eq:main_L2_grd_csml_Na} with $0\le\smlconstNa \le +\oo$,
 then the limit relation \eqref{eq:main_L2_grd_csml} is satisfied 
 with $\smlconstK=\smlconstNa$. 
\end{proposition}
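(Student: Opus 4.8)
The plan is to exploit a single structural observation: in the small-pores regime the scaling parameter that determines the limiting flux does not depend on the reduced diffusivity $\smlD$ at all. Comparing the definitions \eqref{eq:main_L2_grd_csml} and \eqref{eq:main_L2_grd_csml_Na}, one sees that $\smllK$ and $\smllNa$ are given by the \emph{same} expression $\tfrac{\Mopen}{\Mper}\tfrac{\Mpwid^{\SpDim-2}}{\Mwid^{\SpDim-1}}$. This is in sharp contrast with the fast-pores case, where $\fstlK=\fstlNa/\sqrt{\smlD}$ carries the factor $\sqrt{\smlD}$, and it is precisely the reason why the selectivity mechanism is invisible to the small-pores behaviour. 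Once this identity is recorded, both halves of the statement reduce to transferring the geometric small-pores condition between the two species.

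For the direct implication, I would first check that the Sodium small-pores condition holds. Hypothesis \eqref{eq:main_3d_small} for Potassium gives $\Mpwid/\sqrt{\smlD\Mopen}\le\pwido+\smallO(1)$ as $\Mper\to0$; multiplying by $\sqrt{\smlD}$ yields $\Mpwid/\sqrt{\Mopen}\le(\pwido+\smallO(1))\sqrt{\smlD}$. Since $\smlD\to0$ under the standing assumption, the right-hand side tends to $0$, so $\lim \Mpwid/\sqrt{\Mopen}=0\le\pwido$, which is exactly \eqref{eq:main_3d_small} read with $\smlD=1$, i.e.\ the small-pores case for Sodium. Then, because $\smllNa$ and $\smllK$ are identically the same quantity, the assumed convergence $\smllK\to\smlconstK$ in \eqref{eq:main_L2_grd_csml} immediately gives \eqref{eq:main_L2_grd_csml_Na} with $\smlconstNa=\smlconstK$.

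For the converse the argument is even shorter. I am given that the small-pores case for Sodium together with \eqref{eq:main_L2_grd_csml_Na} holds, with limit $\smlconstNa\in[0,+\oo]$, while \eqref{eq:main_L2_grd_csml} is \emph{not} assumed. Since $\smllK$ and $\smllNa$ are the same expression, the convergence $\smllNa\to\smlconstNa$ is literally the convergence $\smllK\to\smlconstNa$, so \eqref{eq:main_L2_grd_csml} follows with $\smlconstK=\smlconstNa$.

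I do not expect any genuine obstacle here. The only point requiring a line of care is the passage from the Potassium to the Sodium geometric condition in the direct part, where one must invoke $\smlD\to0$ to absorb the factor $\sqrt{\smlD}$ and conclude that the Sodium ratio tends to zero. The whole conceptual content of the proposition lies in the fact that the small-pores parameter is independent of $\smlD$, which is exactly what makes this regime insensitive to the selection introduced for Potassium.
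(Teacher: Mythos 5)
Your proof is correct and takes the same route the paper intends: the paper gives no detailed argument, stating only that Propositions \ref{pr:compare_fast} and \ref{pr:compare_small} ``follow from some simple algebra and the definitions,'' and your write-up is precisely that algebra --- the observation that $\smllK$ and $\smllNa$ are the identical expression $\frac{\Mopen}{\Mper}\frac{\Mpwid^{\SpDim-2}}{\Mwid^{\SpDim-1}}$, together with the use of $\smlD\to0$ to transfer the small-pores condition \eqref{eq:main_3d_small} from Potassium to Sodium in the direct implication. Both halves are handled correctly, so nothing is missing.
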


The proofs of Propositions \ref{pr:compare_fast} and
\ref{pr:compare_small} follow from some simple algebra and the
definitions \eqref{eq:main_3d_fast}--\eqref{eq:main_3d_small},
\eqref{eq:main_L2_grd_cfst}--\eqref{eq:main_L2_grd_csml_Na}.

\subsection{The problem for $N=1$ and $\smlD$ not depending on $\Mper$}
\label{s:N=1}
Having in mind the application of next Sections, we look here at the
approximating problem for Potassium given by the $1$-dimensional
version of \eqref{eq:N_prb1}--\eqref{eq:N_initial}, where however
$0<\smlD<\bigD$ are given constants and we set for $0<\Mdpt<\Mbor$
\begin{equation}
 \label{eq:1d_D_definition}
 \Diffu(x)
 =
 \begin{cases}
  \bigD
  \,,
  \quad&
  x\in [0,\Mbor-\Mdpt]
  \\
  \smlD
  \,,
  \quad&
  x\in (\Mbor-\Mdpt,\Mbor]
 \end{cases}
\,.
\end{equation}
Indeed in the numerical simulations it would be technically very
difficult to mimic the limit $\smlD\to0$.  We remark that the boundary
$x=\Mbor$ still is a pore alternating with period $\Mper$ and open
phase $\Mopen$.  By methods similar to those outlined in
Subsection~\ref{s:sketch} we can prove that the solution $\ut$ to this
problem approximates as $\Mper\to 0$ the solution to
\begin{alignat}2
 \label{eq:1d_fin_limprb_pde1}
  \unk_{t}
  -
  (\bigD
  \unk)_{xx}
  &=
  0
  \,,
  &\qquad&
  \text{in $\Oset\times (0,\maxT)$,}
  \\
  \label{eq:1d_fin_limprb_bdr}
  (\bigD\unk)_x
  &=
  0
  \,,
  &\qquad&
  \text{for $x=0$ and $t\in(0,\maxT)$}
  \,,
  \\
\label{eq:1d_fin_limprb_inter_i_}
  (\bigD\unk)_x
  &=
  -
  \limconst_1\frac{\bigD}{\sqrt{\smlD}}
  \unk
  \,,
  &\qquad&
  \text{for $x=\Mbor$ and $t\in(0,\maxT)$}
  \,,
  \\
  \label{eq:1d_fin_limprb_init}
  \unk
  (x,0)
  &=
  \unk_{0}
  (x)
  \,,
  &\qquad&
  \text{in $\Oset$}
  \,,
\end{alignat}
where the constant $\limconst_1$ is defined in
Theorem~\ref{t:main_1d_conv} below.

\begin{theorem}
\label{t:main_1d_conv}
Let $\Mdpt=\sqrt{\smlD \Mper}$ and assume that
\begin{equation}
 \label{eq:1d_lim_cond}
 \lim_{\Mper\to 0}
 \frac{\sqrt{\Mopen}}{\Mper}
 =
 \constidK
 \,,
\end{equation}
where $\constidK$ is a positive constant.  As $\tau\to0$ the solution
$\ut$ converges to the solution of
\eqref{eq:1d_fin_limprb_pde1}--\eqref{eq:1d_fin_limprb_init}, in the
sense of $\Lsp{2}{\Oset\times(0,\maxT)}$. The constant $\limconst_{1}$
in \eqref{eq:1d_fin_limprb_inter_i_} is defined by
\begin{equation}
 \label{eq:1d_main_cont_lim_const}
\limconst_1
=
 \frac{2}{\sqrt{\pi}}
 \constidK
\,.
\end{equation}
\end{theorem}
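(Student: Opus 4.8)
The plan is to run the energy/compactness scheme sketched in Subsection~\ref{s:sketch}, working throughout with the continuous variable $\maxfun\defeq\Diffu\ut$ rather than with $\ut$: by the interface condition \eqref{eq:N_jump} both $\maxfun$ and $\maxfun_x$ are continuous across $\bdrp\Osett=\{x=\Mbor-\Mdpt\}$, $\maxfun$ is exactly the quantity bounded in Proposition~\ref{p:Pmax}, and on $\Osett=(\Mbor-\Mdpt,\Mbor)$ the coefficient is the constant $\smlD$, so that $\maxfun$ solves the constant-coefficient heat equation $\maxfun_t=\smlD\maxfun_{xx}$ there. First I would record the energy estimates for $\maxfun$ that are uniform in $\Mper$ (obtained by testing the weak formulation with $\maxfun$), and extract a subsequence with $\ut\to\unk$ strongly in $L^2((\Oset\setminus\Osett)\times(0,\maxT))$ and $(\bigD\ut)_x\to(\bigD\unk)_x$ weakly in $L^2$; since $\Mdpt=\sqrt{\smlD\Mper}\to0$ the layer $\Osett$ collapses to the point $x=\Mbor$ and has vanishing measure, so $\unk$ is defined on all of $\Oset$. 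Testing against functions supported away from $x=\Mbor$ then yields \eqref{eq:1d_fin_limprb_pde1}, the homogeneous Neumann condition \eqref{eq:1d_fin_limprb_bdr} at $x=0$, and the initial datum \eqref{eq:1d_fin_limprb_init} in the standard way.

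The only genuine content is the identification of the interface condition \eqref{eq:1d_fin_limprb_inter_i_} at the pore $x=\Mbor$, which I would obtain by the oscillating-test-function method of \cite{Andreucci:2012}; because $\smlD$ is constant on $\Osett$ that construction transfers here essentially verbatim. Its mechanism is a boundary-layer analysis of $\maxfun$ over a single open phase $[j\Mper,j\Mper+\Mopen)$: imposing the open-pore condition $\maxfun(\Mbor,\cdot)=0$ (equivalent to $\ut=0$) on a layer resting at the bulk value $\maxfun\approx\bigD\unk(\Mbor,t)$ produces the self-similar profile $\maxfun(x,t)=\bigD\unk(\Mbor,t)\,\erf\bigl((\Mbor-x)/(2\sqrt{\smlD\,(t-j\Mper)})\bigr)$, hence the instantaneous outflux $-\maxfun_x(\Mbor,t)=\bigD\unk(\Mbor,t)/\sqrt{\pi\smlD\,(t-j\Mper)}$. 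Integrating over one open phase gives the per-cycle loss
\begin{equation*}
  \int_{j\Mper}^{j\Mper+\Mopen}\bigl(-\maxfun_x(\Mbor,t)\bigr)\di t
  \approx
  \frac{2}{\sqrt{\pi}}\,\frac{\sqrt{\Mopen}}{\sqrt{\smlD}}\,\bigD\,\unk(\Mbor,j\Mper)\,.
\end{equation*}
Summing over the $\mt=\maxT/\Mper$ cycles and pairing with a smooth test function $\phj$ evaluated at $x=\Mbor$ turns the accumulated boundary term into
\begin{equation*}
  \frac{2}{\sqrt{\pi}}\,\frac{\sqrt{\Mopen}}{\Mper}\,\frac{\bigD}{\sqrt{\smlD}}
  \sum_{j=0}^{\mt-1}\Mper\,\unk(\Mbor,j\Mper)\,\phj(\Mbor,j\Mper)\,,
\end{equation*}
and, using \eqref{eq:1d_lim_cond} together with the convergence of this Riemann sum to $\int_0^{\maxT}\unk(\Mbor,t)\phj(\Mbor,t)\di t$, the limit coefficient is precisely $\limconst_1\bigD/\sqrt{\smlD}$ with $\limconst_1=(2/\sqrt{\pi})\,\constidK$, which is \eqref{eq:1d_main_cont_lim_const}. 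This is the $\SpDim=1$ reading of \eqref{eq:main_cont_lim_const}, since for $\SpDim=1$ the pore factor equals $1$ and $\fstlK=\sqrt{\Mopen}/(\sqrt{\smlD}\Mper)$.

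The hard part is making this boundary-layer heuristic rigorous within the test-function method, and it splits into three points. First, one must confine the layer to $\Osett$: the open-phase diffusion length $\sqrt{\smlD\Mopen}$ must be negligible against the layer thickness $\Mdpt=\sqrt{\smlD\Mper}$, which holds because $\sqrt{\smlD\Mopen}/\Mdpt=\sqrt{\Mopen/\Mper}\to0$ (indeed \eqref{eq:1d_lim_cond} forces $\Mopen\sim\constidK^2\Mper^2$); this guarantees the profile feels neither the coefficient jump at $x=\Mbor-\Mdpt$ nor the bulk diffusivity $\bigD$, so $\smlD$ is genuinely the relevant constant. Second, one must justify replacing the a priori unknown reset level of $\maxfun$ at the onset of each open phase by $\bigD\unk(\Mbor,t)$; this rests on the strong $L^2$ convergence of $\ut$ off the layer, the trace continuity of $\maxfun$ across $\bdrp\Osett$, and the re-equilibration of the layer during the long closed phase $(\Mopen\ll\Mper)$, and it is exactly here that the uniform energy estimates and the bounds of Proposition~\ref{p:Pmax} are consumed to dominate the error terms. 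Third, one must prove the Riemann-sum convergence of the cycle sums and absorb the accumulated remainders, the one-dimensional analogue of hypothesis \eqref{eq:wdef_ass_ii} in the multidimensional theory.

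Finally, the limit problem \eqref{eq:1d_fin_limprb_pde1}--\eqref{eq:1d_fin_limprb_init} is a linear parabolic equation with a homogeneous Neumann condition at $x=0$ and a Robin (third-kind) condition at $x=\Mbor$, so it has a unique weak solution; hence the limit $\unk$ does not depend on the extracted subsequence and the whole family $\ut$ converges in $\Lsp{2}{\Oset\times(0,\maxT)}$, as claimed.
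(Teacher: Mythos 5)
Your proposal is correct and follows essentially the same route as the paper, which proves Theorem~\ref{t:main_1d_conv} only by invoking the scheme of Subsection~\ref{s:sketch}: uniform energy estimates plus Proposition~\ref{p:Pmax} giving compactness away from the layer, and the oscillating-test-function analysis of \cite{Andreucci:2012} (unchanged because $\smlD$ is constant on $\Osett$) to identify the Robin condition at $x=\Mbor$. Your explicit $\erf$-layer computation of the per-cycle loss, the scaling check $\sqrt{\smlD\Mopen}/\Mdpt=\sqrt{\Mopen/\Mper}\to0$, and the resulting constant $\limconst_1\bigD/\sqrt{\smlD}$ with $\limconst_1=\frac{2}{\sqrt{\pi}}\constidK$ correctly reproduce the mechanism that the paper leaves implicit in the citation.
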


When the limit in \eqref{eq:1d_lim_cond} is 
either zero or infinite, the limiting boundary condition is different.

\begin{theorem}
\label{t:1d_neu_bdr}
Assume $\Mdpt=\sqrt{\smlD \Mper}$ and \eqref{eq:1d_lim_cond}.  Then as
$\tau\to0$ the solution $\ut$
converges in the sense of $\Lsp{2}{\Oset\times(0,\maxT)}$ to a
function $\unk$ satisfying \eqref{eq:1d_fin_limprb_pde1},
\eqref{eq:1d_fin_limprb_bdr}, \eqref{eq:1d_fin_limprb_init} and
\begin{alignat}{2}
  \label{eq:1d_fin_limprb_inter_i_neu}
  (\bigD\unk)_x
  &=
  0
  \,,
  \quad
  \text{for $x=\Mbor$ and $t\in(0,\maxT)$,}
  &\qquad&
  \text{provided $\constidK=0$;}
  \\
 \label{eq:1d_fin_limprb_inter_i_dir}
\unk
  &=
0
\,,
\quad
  \text{for $x=\Mbor$ and $t\in(0,\maxT)$,}
  &\qquad&
  \text{provided $\constidK=+\oo$.}
\end{alignat}
\end{theorem}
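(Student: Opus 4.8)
The plan is to obtain both degenerate conditions from Theorem~\ref{t:main_1d_conv} by a monotone sandwiching argument, so that no fresh oscillating-test-function computation is needed. Exactly as in the proof of Theorem~\ref{t:main_1d_conv}, the energy estimates uniform in $\Mper$ together with the bounds of Proposition~\ref{p:Pmax} permit, after passing to a subsequence, the convergence of $\ut$ in $\Lsp{2}{\Oset\times(0,\maxT)}$ to a limit $\unk$ that already solves the bulk equation \eqref{eq:1d_fin_limprb_pde1}, the lateral condition \eqref{eq:1d_fin_limprb_bdr} and the initial condition \eqref{eq:1d_fin_limprb_init}; only the behaviour of $\unk$ at $x=\Mbor$ remains to be pinned down. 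I write $\unk^{[\kappa]}$ for the solution of \eqref{eq:1d_fin_limprb_pde1}--\eqref{eq:1d_fin_limprb_init} in which \eqref{eq:1d_fin_limprb_inter_i_} is imposed with $\limconst_1=\tfrac{2}{\sqrt{\pi}}\kappa$. By the classical stability of linear parabolic Robin problems this family is nonincreasing in $\kappa\in[0,+\oo]$, with $\unk^{[0]}$ the Neumann solution of \eqref{eq:1d_fin_limprb_inter_i_neu} and $\unk^{[+\oo]}$ the Dirichlet solution of \eqref{eq:1d_fin_limprb_inter_i_dir}, and $\unk^{[\kappa]}\to\unk^{[0]}$ as $\kappa\downarrow0$, $\unk^{[\kappa]}\to\unk^{[+\oo]}$ as $\kappa\uparrow+\oo$.

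For the Neumann case $\constidK=0$ I would compare $\ut$ with two auxiliary pore problems sharing the same $\Mper$, $\smlD$ and $\Mdpt=\sqrt{\smlD\Mper}$, differing only in the length of the open phase. Choosing that length to be $0$ (pore permanently closed) yields a pure Neumann problem whose solution dominates $\ut$ and which, as the thin layer $(\Mbor-\Mdpt,\Mbor]$ shrinks, converges to $\unk^{[0]}$; hence $\unk\le\unk^{[0]}$. For the reverse inequality, fix $\eps>0$ and take the open phase equal to $\eps^{2}\Mper^{2}$; since $\sqrt{\Mopen}/\Mper\to0$ this exceeds $\Mopen$ for small $\Mper$, so the corresponding solution lies below $\ut$, while Theorem~\ref{t:main_1d_conv} (now with limit constant $\eps$) forces it to converge to $\unk^{[\eps]}$. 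Thus $\unk\ge\unk^{[\eps]}$, and letting $\eps\downarrow0$ gives $\unk\ge\unk^{[0]}$, so that $\unk=\unk^{[0]}$ satisfies \eqref{eq:1d_fin_limprb_inter_i_neu}.

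The Dirichlet case $\constidK=+\oo$ is symmetric. Comparison with the permanently open pore (open phase equal to $\Mper$) gives a pure Dirichlet problem whose solution is dominated by $\ut$ and whose limit is $\unk^{[+\oo]}$, so $\unk\ge\unk^{[+\oo]}$. For the upper bound, fix $M>0$ and take the open phase equal to $M^{2}\Mper^{2}$; because $\sqrt{\Mopen}/\Mper\to+\oo$ this is at most $\Mopen$ for small $\Mper$, so the corresponding solution dominates $\ut$ and converges by Theorem~\ref{t:main_1d_conv} (with limit constant $M$) to $\unk^{[M]}$, whence $\unk\le\unk^{[M]}$. Sending $M\uparrow+\oo$ gives $\unk\le\unk^{[+\oo]}$, so $\unk=\unk^{[+\oo]}$ and \eqref{eq:1d_fin_limprb_inter_i_dir} holds. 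Since in both cases the limit is uniquely identified, the whole family $\ut$, not merely a subsequence, converges.

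The step I expect to be the main obstacle is justifying the monotone comparison \emph{for the $\Mper$-problem itself}: enlarging the open phase amounts to imposing the homogeneous pore condition $\ut=0$ (the one-dimensional form of \eqref{eq:N_open_pore}) at $x=\Mbor$ on a larger time set, while the no-flux condition \eqref{eq:N_closed_pore} holds on the complement, and one must show that this can only decrease the nonnegative solution. This is a comparison principle for a parabolic equation whose boundary condition alternates in time between Dirichlet and Neumann type, and it has to be established within the weak class $\solspace_K$ of \eqref{eq:mathmod_solspace}, where $\ut$ is itself unbounded on $\Osett$ through the jump relation \eqref{eq:jump}; this is what makes the step delicate. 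A direct alternative that sidesteps the comparison principle is to revisit the test-function analysis underlying Theorem~\ref{t:main_1d_conv}: in one dimension it produces an effective boundary flux whose strength is measured by $\fstlK$, which vanishes when $\constidK=0$ and thus gives \eqref{eq:1d_fin_limprb_inter_i_neu}, whereas for $\constidK=+\oo$ the uniform energy bound keeps the dissipation $\fstlK\int_{0}^{\maxT}(\ut)^{2}(\Mbor,t)\di t$ bounded, so that the trace of $\unk$ at $x=\Mbor$ must vanish, giving \eqref{eq:1d_fin_limprb_inter_i_dir}.
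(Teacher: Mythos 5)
Your proposal is correct, but it follows a genuinely different route from the paper's. The paper proves Theorem~\ref{t:1d_neu_bdr} by the same machinery as Theorem~\ref{t:main_1d_conv}, i.e.\ the method of Subsection~\ref{s:sketch}: compactness (uniform energy estimates plus Proposition~\ref{p:Pmax}) identifies the interior equation and the lateral and initial conditions, while the condition at $x=\Mbor$ is read off from the oscillating test function analysis of \cite{Andreucci:2012}, whose effective boundary term is proportional to $\sqrt{\Mopen}/(\sqrt{\smlD}\,\Mper)$, so that it disappears when $\constidK=0$ and forces a vanishing trace when $\constidK=+\oo$; this is precisely the ``direct alternative'' you sketch in your closing paragraph. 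Your main argument instead reduces both degenerate cases to the nondegenerate Theorem~\ref{t:main_1d_conv}, sandwiching $\ut$ between auxiliary pore problems with open phases $\eps^{2}\Mper^{2}$, $M^{2}\Mper^{2}$, or permanently open/closed pores. What this buys is an argument requiring no new test-function computation, and, since the squeezed limit is unique, convergence of the whole family rather than of a subsequence comes for free; what it costs is the time-monotone comparison principle you rightly flag as the delicate step.

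That comparison lemma is not a real obstruction: it can be closed with the paper's own device from Proposition~\ref{p:Pmax}. If $A_{1}\subset A_{2}$ are the open time sets of two schedules and $u_{1}$, $u_{2}$ the corresponding solutions in $\solspace_K$ with the same data, set $v=\Diffu u_{1}-\Diffu u_{2}$ and use its negative part $v_{-}$ as test function in both weak formulations: $v_{-}$ is admissible because on $A_{2}$ one has $u_{2}=0$ while $\Diffu u_{1}\ge 0$ by \eqref{eq:N_max_result}, so $v_{-}$ vanishes on both open sets; then the time term is a negative exact derivative of $\int (v_{-})^{2}/\Diffu$, and $\grad v\cdot\grad v_{-}=-\abs{\grad v_{-}}^{2}$, whence $v_{-}\equiv 0$, i.e.\ $u_{1}\ge u_{2}$. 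With that settled, the only items still to be written out are the facts you invoke as classical: monotonicity and stability in $\kappa$ of the Robin family $\unk^{[\kappa]}$ including the singular limits $\kappa\to 0,+\oo$, and the convergence of the permanently closed (resp.\ open) problem with shrinking layer $\Mdpt=\sqrt{\smlD\Mper}$ to $\unk^{[0]}$ (resp.\ $\unk^{[+\oo]}$). These are standard; note moreover that in the Dirichlet case the permanently open comparison is dispensable, because $0\le\unk\le\unk^{[M]}$ for all $M$ already forces the trace of $\unk$ at $x=\Mbor$ to vanish, after which uniqueness for the limit problem gives $\unk=\unk^{[+\oo]}$.
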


In the case of Sodium the same results hold, replacing $\smlD$ with
$\bigD$ everywhere above in this Subsection.

If Theorem~\ref{t:main_1d_conv} holds, from
\eqref{eq:1d_fin_limprb_inter_i_} we have the following effect of
enhanced flux by selection: The asymptotic ratio between outgoing flux
and concentration $-{(\bigD \unk)_x}/{\unk}$ at $x=\Mbor$ in the case
of Potassium is bigger with respect to the case of Sodium by a factor
$\sqrt{{\bigD}/{\smlD}}$.

\section{A discrete space model}
\label{s:rw}
Next we approach the problem via a discrete space model.  In this
section we first define the model and then discuss heuristically the
relation between the outgoing flux and the ion density close to the
pore.  In next section this model will be studied via Monte Carlo
simulations.

We consider $M$ one--dimensional independent random walkers on
$H=H_0\cup H_1$ with
$H_0=\{\ell,2\ell,\dots,n_0\ell\}\subset\ell\bb{Z}$ and
$H_1=\{(n_0+1)\ell,(n_0+2)\ell,\dots,(n_0+n_1)\ell\}\subset\ell\bb{Z}$,
where $n_0$ and $n_1$ are non--negative integers.  We denote by $t\in
s\bb{Z}_+$ the time variable.  We assume the following: (i) each
random walk is symmetric, (ii) only jumps between neighboring sites
are allowed, (iii) in the region $H_1$ particles have the probability
$r\in[0,1]$ not to move, (iv) $0$ is a reflecting boundary point, and
(v) picked the two integers $1\le \bar\sigma\le \bar\tau$, we
partition the time space $s\bb{Z}_+$ in
\begin{equation*}
  A=
  \bigcup_{i=1}^\infty
  \{s(i-1)\bar\tau,\dots,s[(i-1)\bar\tau+\bar\sigma-1]\}
\;\;\;\textrm{ and }\;\;\;
  C=
  \bigcup_{i=1}^\infty
  \{s[(i-1)\bar\tau+\bar\sigma],\dots,s[i\bar\tau-1]\}
\end{equation*}
and assume that the boundary point $(n_0+n_1+1)\ell$ is absorbing at times 
in $A$ (open phase) and reflecting at times in $C$ (closed phase). 

To be more precise, we write explicitly the probability 
$p(x,y)$ that the walker at site $x$ jumps to site $y$. We first set 
$r(x)=0$ if $x\in H_0$ and $r(x)=r$ if $x\in H_1$, then we have
\begin{equation*} 
p(\ell,\ell)=\frac{1}{2},\;\;\; 
p(x,x+\ell)=\frac{1-r(x)}{2}\;\textrm{ for }\;x=\ell,\dots,(n_0+n_1-1)\ell,
\end{equation*} 
and
\begin{equation*} 
p(x,x-\ell)=\frac{1-r(x)}{2}\;\textrm{ for }\;x=2\ell,\dots,(n_0+n_1)\ell\,.
\end{equation*} 
Moreover 
\begin{displaymath}
 p((n_0+n_1)\ell,(n_0+n_1)\ell)= 
 \left\{
 \begin{array}{ll}
 r(x) & \textrm{at times in } A\\
 (1+r(x))/2 & \textrm{at times in } C\\
 \end{array}
 \right.
\end{displaymath}
and
\begin{displaymath}
 p((n_0+n_1)\ell,(n_0+n_1+1)\ell)= 
 \left\{
 \begin{array}{ll}
 (1-r(x))/2 & \textrm{at times in } A\\
 0 & \textrm{at times in } C\,.\\
 \end{array}
 \right.
\end{displaymath}

Notice that when the walker reaches the site $(n_0+n_1+1)\ell$ it is 
frozen there, so that this system is a model for the proposed 
problem in the following sense: each walker is an ion, 
the cell is the set $ H_0\cup H_1$, 
the low diffusivity region close to the pore is the set $ H_1$
(indeed, there the particles move less
frequently and, hence, diffuse at a slower rate),
at the initial time there are $M$ ions in the cell, 
each ion absorbed at the site $(n_0+n_1+1)\ell$ is counted as 
an ion which exited the cell. 
It is also important to note that the case $n_1\ge1$ models the 
Potassium problem, 
whereas the case $n_1=0$ 
models the Sodium problem. 
The Sodium--like case  has been dealt upon in 
\cite{Andreucci:2013b}, hence from now on we assume $n_1\ge1$. 

It is important to note that for $\bar\sigma=0$, namely, when 
the pore is always closed, each walker admits the following unique
stationary measure: 
the probability that a site in the region 
$ H_0$ is occupied by the walker is equal to 
$(1-r)/[(1-r)n_0+n_1]$, 
the probability that a site in the region 
$ H_1$ is occupied by the walker is equal to 
$1/[(1-r)n_0+n_1]$. This state will be called in the sequel 
the \textit{closed pore stationary state}. In this state, the typical 
number of particle on a site in $ H_0$ (resp.\ in $ H_1$) 
is given by 
$M(1-r)/[(1-r)n_0+n_1]$
(resp.\ $M/[(1-r)n_0+n_1]$). 
We denote by $\bb{P}[\cdot]$ and 
$\bb{E}[\cdot]$ the probability and the average along the trajectories of the 
process started at the closed pore stationary state.

When the pore is opened for the first time, 
the initial state is perturbed as an effect 
of the outgoing flux of particles; at the end of the first opening cycle 
the total number of particles in the system will be smaller than $M$. 
When the pore is closed, the system 
tends to restore the closed pore stationary state with the new value 
of the total particle number. We will always assume that 
\begin{equation}
\label{assumo01}
\bar\tau\gg\bar\sigma
\;\;\;\textrm{ and }\;\;\;
\bar\tau> n_1^2
\end{equation}
so that we can reasonably think that at the beginning of each 
opening cycle the distribution of particles throughout the region 
$ H_1$ 
is approximatively constant.
Indeed, under 
this hypothesis the time interval in which the 
right hand boundary point is absorbing is much smaller than
that in which it is reflecting; in other words in each cycle
the pore is open in a very short time subinterval.

In the framework of this model an estimator for the 
ratio between the outgoing ion flux and 
the typical number of particles in the high diffusivity region 
but close to the low diffusivity one
is given by 
\begin{equation}
\label{estimatore}
K_i=
\frac{\bb{E}[F_i]/(s\bar\tau)}{(\bb{E}[U_i]/\bar{\tau})/\ell}
\;\;\;\;\;\;\textrm{ for all }i\in\bb{Z}_+
\end{equation}
where
$F_i$ is the number of walkers that reach the boundary point 
$(n_0+n_1+1)\ell$ 
during the $i$--th cycle,
$U_i$ is the sum over the time steps in the $i$--th cycle 
of the number of walkers at the site $n_0\ell$.

We are interested in two main problems. 
The first question that we address is the dependence 
on time of the above ratio, in other words we wonder if this quantity 
does depend on $i$.
The second problem that we investigate is the connection between the 
predictions of this discrete time model and those provided by 
the continuous space one introduced in 
Subsection~\ref{s:N=1}.

\subsection{The estimator $K_i$ is a constant}
\label{s:time}
\par\noindent
As remarked above, under 
the assumption \eqref{assumo01}, it is reasonable 
to guess that during any cycle the walkers in the region 
$ H_1$ 
are distributed uniformly with a very good approximation. 
Hence, at each time and 
at each site of $ H_1$ the number of walkers on that site 
is approximatively given a constant denoted by $v_i$.
Since $\bar\sigma$ is much smaller than $\bar\tau$, the mean number of 
walkers $\bb{E}[F_i]$ that reach the boundary point $(n_0+n_1+1)\ell$ 
during the 
cycle $i$ is proportional to $v_{i-1}$ and the 
constant depends on $\bar\sigma$, so that we have 
\begin{equation}
\label{alpha}
\bb{E}[F_i]=\alpha(\bar\sigma)v_{i-1}
\,.
\end{equation}
We also note that, since $\bar\tau\gg\bar\sigma$, we have that 
\begin{displaymath}
n_1v_i
=
n_1v_{i-1}
-\bb{E}[F_i]
+\Delta_i
\end{displaymath}
where $\Delta_i$ is the expected
difference between the number of particles 
that during the cycle $i$ moved from the region $ H_0$ to the 
region $ H_1$ and that of the particle that moved in the 
opposite direction; note that $\Delta_i$ admits the obvious bound 
$\Delta_i\le M$. 
At the end of each cycle we can assume that a 
sort of stationarity is achieved on the boundary 
between $ H_0$ and $ H_1$; so that we can assume 
$\bb{E}[U_i]/(2\bar\tau)=v_i (1-r)/2$. 
By using this remark, the two equations above, and the fact that 
$\Delta_i\le M$, 
we get that 
\begin{equation}
\label{estimatore02}
K_i
\stackrel{n_1\to\infty}{\approx}
K
\equiv
\alpha(\bar\sigma)
\frac{1}{\bar\tau}
\frac{\ell}{s}
\frac{1}{1-r}
\end{equation}
showing that, provided $n_1$ is large enough,  
the estimator \eqref{estimatore} 
does not depend on time, namely, it is approximatively 
equal to $K$ for each $i$. 

\subsection{Behavior of the constant $\alpha$ for large $\bar\sigma$}
\label{s:alpha}
\par\noindent
We are, now, interested in finding an estimate for 
$\alpha(\bar\sigma)$ in the limit when $\bar\sigma$ is large. 
The reason 
why we need this kind of result will be discussed in 
the following section.

If $\bar\sigma$ is large, 
at time $\bar\sigma$ each walker space distribution probability 
can be approximated by a Gaussian function with variance 
$\sqrt{2 \bar\sigma(1-r)}$ (Central Limit Theorem). 
Hence, the number of particles that reach in 
$\bar\sigma$ steps the boundary $(n_0+n_1+1)\ell$ is 
approximatively given by the number of walkers at the 
$\sqrt{2 \bar\sigma(1-r)}$ sites counted starting from the absorbing 
boundary point divided by $2$. Hence, we find the rough estimate 
\begin{equation}
\label{restim}
\alpha(\bar\sigma)
\approx
\frac{1}{2}\sqrt{2\bar\sigma(1-r)}
=
\sqrt{\frac{\bar\sigma(1-r)}{2}}
\end{equation}
suggesting that, for large $\bar\sigma$, 
the quantity $\alpha(\bar\sigma)$ depends on $\bar\sigma$
as $\sqrt{\bar\sigma}$.

\subsection{Comparison with the continuum space model}
\label{s:continuo}
\par\noindent
In order to compare the results discussed above in this section with those in 
Subsection~\ref{s:N=1} referring to the continuous space 
model defined therein, we have to consider two limits. 
The parameter $\bar\sigma$ has to be taken large (recall, 
also, that we always assume $\bar\tau\gg\bar\sigma$,
see \eqref{assumo01})
so that, 
due to the Central Limit Theorem,  
the discrete and the continuous space model have similar behaviors 
provided the other parameters are related properly.
With a correct choice of the parameters, then, 
we expect that the discrete space model 
will give results similar to those predicted by the 
continuous space one. 
In Subsection~\ref{s:N=1}, see Theorem~\ref{t:main_1d_conv}, 
the relation between 
the outgoing flux and the density close to the pore is worked out only 
in the limit $\tau\to0$. We then have to understand how to implement such 
a limit in our discrete time model. 

We perform this analysis in the critical case $\sigma_\tau=\mu^2\tau^2$;
note the the hypothesis in Theorem~\ref{t:main_1d_conv} is weaker, 
indeed, there $\ell_{1K}$ is the limit for $\tau\to0$ of the 
ratio $\sqrt{\sigma_\tau}/\tau$, see equation \eqref{eq:1d_lim_cond}.
From now on we let 
$L_0=a-\delta$ and $L_1=\delta$, see Subsection~\ref{s:N=1}.
We imagine to fix the continuous model parameters and then 
choose properly the discrete space model ones. 
More precisely we assume given $L_0$, $D_0$, $D_1$, and $\mu$, and 
recall that $L_1$ is related to the other parameters by the equation
\begin{equation}
\label{l1tau}
L_1=\sqrt{D_1\tau}
\;\;\; .
\end{equation}
However, notice that, as an immaterial technical change, 
here $L_0$ is fixed rather than $L_0+L_1$ as in Subsection~\ref{s:N=1}.

We now describe our procedure in detail:
in order to compare the discrete and the continuum space models we first let 
\begin{equation}
\label{lenght}
L_0=\ell n_0
\;\;\;\textrm{ and }\;\;\;
L_1=\ell n_1
\,.
\end{equation}
These two equations 
yield an expression for $\ell$ and the relation that 
must be verified by $n_0$ and $n_1$, more precisely we get
\begin{equation}
\label{length02}
\ell=\frac{L_0}{n_0}
\;\;\;\textrm{ and }
n_1=n_0\,\frac{L_1}{L_0}
\,.
\end{equation}

As already remarked, 
from the Central Limit Theorem, it follows 
that the two models give the same long time predictions if 
$2D_0s=\ell^2$ and 
$2D_1s=\ell^2(1-r)$. We then get an expression for the unit time 
and a relation between $D_1$ and $r$, namely, 
\begin{equation}
\label{time}
s
=\frac{\ell^2}{2D_0}
=\frac{L_0^2}{2D_0n_0^2}
\;\;\;\textrm{ and }\;\;\;
D_1=D_0(1-r)
\,.
\end{equation}
We then consider the random walk model introduced above 
by choosing $\bar\sigma$ and $\bar\tau$ such that 
the equality 
$\bar\sigma s = (\mu \bar\tau s )^2$ 
is satisfied as closely as possible (note that $\bar\tau$ and $\bar\sigma$ 
are integers). 
This can be done as follows: recall that $L_0$, and $\mu$ are fixed; 
we choose also $n_0$ and $\bar\sigma$, and set  
\begin{equation}
\label{ntau}
\bar\tau
=
\bigg\lfloor
\frac{1}{\mu}
\sqrt{\frac{\bar\sigma}{s}}
\bigg\rfloor
=
\frac{1}{\mu}
\frac{n_0}{L_0}
\sqrt{2D_0\bar\sigma}
-b
\end{equation}
where $\lfloor\cdot\rfloor$ denotes the integer part of a real number and 
$b\in[0,1)$. 
With the above choice of the parameters, the behavior of the 
random walk model has to be compared with that of the continuum space 
model in Subsection~\ref{s:N=1} with period
\begin{equation}
\label{tau}
\tau
=
s\bar\tau
=
\frac{1}{\mu}
\frac{L_0\sqrt{\bar\sigma}}{n_0\sqrt{2D_0}}
-
\frac{L_0^2}{2D_0n_0^2}
b
\,.
\end{equation}

The equation \eqref{tau} is very important in our computation, since it 
suggests that the homogenization limit $\tau\to0$ studied in the 
continuum model should be captured by the discrete space model 
via the thermodynamics limit $n_0\to\infty$. 
We also note that, from the equations above, one gets
\begin{displaymath}
n_1
=
n_0\frac{L_1}{L_0}
=
\frac{n_0}{L_0}\sqrt{D_1\tau}
=
\frac{n_0}{L_0}\sqrt{D_0(1-r)s\bar\tau}
=
\frac{n_0}{L_0}\sqrt{D_0(1-r)}
\frac{L_0}{n_0}\frac{1}{\sqrt{2D_0}}
\sqrt{\bar\tau}
=
\sqrt{\frac{1-r}{2}}
\sqrt{\bar\tau}
\end{displaymath}
showing that in the thermodynamics limit $n_0\to\infty$ also the parameter 
$n_1$ tends to infinity.
In conclusion, from the continuous space model, we 
expect that the estimator $K$ converges to the constant 
$2\mu D_0/\sqrt{\pi D_1}$ in this limit. 
In the next section we shall check this result 
via a Monte Carlo computation, but here we argue this guess
has a chance to be correct on the basis of the 
rough estimate 
(\ref{restim}).

Indeed, we first note that  
by \eqref{estimatore02}, \eqref{ntau}, the first of equations 
\eqref{length02},
and the first of equations 
\eqref{time},
we have that
\begin{equation}
\label{estimatore03}
K
=
\frac{\alpha(\bar\sigma)}{\sqrt{\bar\sigma}}
\frac{1}{1-r}
\mu
\sqrt{2D_0}
\,.
\end{equation}
Then, by (\ref{restim}), we get that
\begin{displaymath}
K
\stackrel{\bar\sigma\to\infty}{\longrightarrow}
\frac{1}{2}\sqrt{2(1-r)}
\,
\frac{1}{1-r}
\,
\mu
\,
\sqrt{2D_0}
=
\frac{\mu}{\sqrt{1-r}}
\,
\sqrt{D_0}
=
\frac{\mu D_0}{\sqrt{D_1}}
\end{displaymath}
where in the last step we have used the second among the equations 
\eqref{time}.
Note that this heuristic result is very close to the 
desired limit, at least the dependence on the diffusion coefficient 
is correct. The prefactor is wrong due to the poor estimate \eqref{restim}
that
we have for the constant $\alpha(\bar\sigma)$.

\section{Monte Carlo results}
\label{s:discu}
\par\noindent
In this section we describe the Monte Carlo computation of the 
constant \eqref{estimatore}. This measure is quite difficult since 
in this problem the stationary state is trivial, in the sense 
that, since there is an outgoing flux through the boundary point 
$(n_0+n_1+1)\ell$ 
and no ingoing flux is present, all the particles will eventually exit 
the system itself. 
On the other hand, the measure that we have to perform 
is intrinsically non--stationary.
Indeed, 
our problem can be stated as follows: both the outgoing 
flux and the local density at the boundary between the low and the 
high diffusivity regions
are two in average decreasing 
random variables, but their mutual ratio is constant in average. 
We then have to set up a procedure to capture this constant ratio. 

\begin{figure}[!h]
  \centering
   {\epsfig{file = 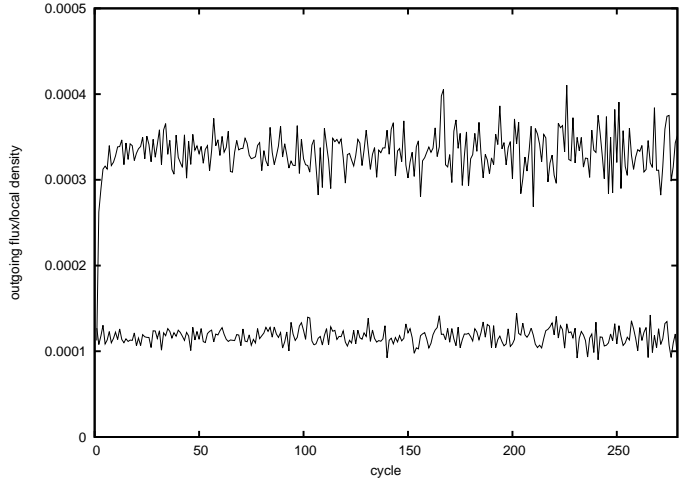, width = 6.5cm, angle=-90}}
  \caption{The quantity $k_i$ is plotted vs.\ the cycle number $i$ in the 
           case $D_1=0.1$, $\bar\sigma=1000$, and $n_0=5023$.
           The top curve refers to the Potassium case, while the bottom 
           one refers to the Sodium one ($n_1=0$). 
          }
  \label{f:kiuno}
  \vspace{-0.1cm}
\end{figure}

We fix the parameters 
$L_0=1$, 
$D_0=1$, 
and
$\mu=1$.
For the diffusion close to the pore we shall consider two 
cases, $D_1=0.1$ and $D_1=0.25$; 
the corresponding values for the parameter $r$, see the second 
of equations \eqref{time}, 
are $0.9$ and $0.75$.
We note that in these two cases the 
continuous space model limit for the constant $K$ is 
respectively given by 
$2\mu D_0/\sqrt{\pi D_1}=3.568$
and 
$2\mu D_0/\sqrt{\pi D_1}=2.257$. 
For the time length of the open state,
we shall consider the following values 
\begin{displaymath}
\bar{\sigma}
=
500\,,
1000\,,
2000\,,
5000
\,.
\end{displaymath}
For each of them, in order to 
perform the limit $\tau\to0$, we shall consider different values of $n_0$, 
ranging from about
$1000$ to $20000$, for the number of sites of the lattice $ H_0$. All
the other parameters will be computed via the equations 
discussed in Subsection~\ref{s:continuo}. As initial number of particles 
we used $M=10^5$. 

For each choice of the two parameters $\bar{\sigma}$ and $n_0$ we shall 
run the process and compute at each cycle $i$ the quantity 
\begin{displaymath}
k_i
=
\frac{F_i/(\bar\tau)}{U_i/(\bar\tau)}
\end{displaymath}
where, we recall, $\bar\tau$ is defined in \eqref{ntau} and 
$F_i$ and $U_i$ have been defined below \eqref{estimatore}.

The quantity $k_i$ is a random variable 
fluctuating with $i$, but, as it is illustrated in 
Figure~\ref{f:kiuno}, it performs random 
oscillations around a constant reference value. 
We shall measure this reference value by computing 
the time average of the quantity $k_i$. We shall average $k_i$ by 
neglecting the initial cycles 
and the very last one which are characterized by large 
oscillations due to the smallness of the number of residual particles in 
the system. 

The product of the reference value for the random variable $k_i$ 
and the quantity $\ell/s$, see the equations \eqref{estimatore}, 
\eqref{length02} and \eqref{time},
will be taken as 
an estimate for $K$.  In other words 
the output of our computation will be the quantity 
\begin{equation}
\label{costK}
K=
\frac{\ell}{s}\times (k_i\textrm{ time average}) 
\,.
\end{equation}

\begin{figure}[!h]
  \centering
   {\epsfig{file = 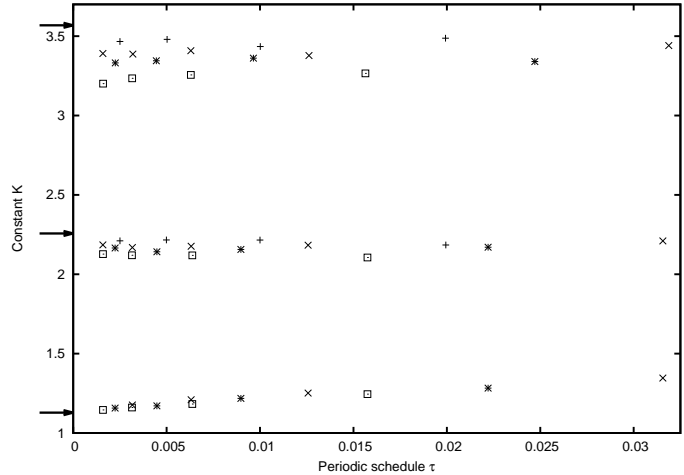, width = 6.5cm, angle=-90}}
  \caption{Monte Carlo estimate of the constant $K$ measured as in  
           \eqref{costK} vs.\ the periodic time schedule $\tau$ for 
           the Potassium model with 
           $D_1=0.1$ at the top and
           $D_1=0.25$ in the middle, and for the Sodium case at the bottom.
           The symbols $+$, $\times$, $*$, and 
           $\boxdot$ refer, respectively, to the cases 
           $\bar\sigma=5000,2000,1000,500$.
           The three arrows, from the top to the bottom, 
           indicate the three theoretical limits 
           3.568, 2.257, and 1.1284
           corresponding respectively to the three cases  
           Potassium $D_1=0.1$, Potassium $D_1=0.25$, and Sodium.
           Notice that the Monte Carlo $\tau\to0$ limit approximate 
           the theoretical one better and better when $\bar\sigma$ 
           is increased.
}
  \label{f:costante}
  \vspace{-0.1cm}
\end{figure}

Our numerical results are illustrated in Figure~\ref{f:costante}.
In \cite{Andreucci:2013b}, for the Sodium case, we noted that 
by increasing $\bar\sigma$ the numerical series tended to collapse 
to one limiting behavior. In that paper we discussed Monte 
Carlo results in the cases $\bar\sigma=30, 50, 70, 100, 120, 150, 200$. 
In this paper we consider larger values of $\bar\sigma$ and, as we 
expected, the numerical series for the Sodium case collapse 
to one single curve whose $\tau\to0$ limit is very close 
to the theoretical value $2\mu\sqrt{D_0}/\sqrt{\pi}=1.1284$, 
see \cite[Section~3.3]{Andreucci:2013b}.

The numerical study is more complicated in the Potassium case, 
since after each opening cycle the system tends to restore 
a new closed pore stationary state with two different 
typical densities in the regions $ H_0$ and $ H_1$. 
It is not really possible to estimate how efficient is 
this restoring process.
However, our numerical estimates 
are perfectly in agreement with the theoretical prediction. 

Again, we note that when $\bar\sigma$ is increased the 
numerical series tend to be mutually closer and closer, even if we cannot 
observe a precise collapse. But it looks reasonable to suppose 
that, if larger values of $\bar\sigma$ were considered, a 
complete collapse could be obtained. Considering larger value 
of $\bar\sigma$ would be extremely time consuming from 
the point of view of numerical simulations, indeed larger and 
larger values of the lattice size $n_0$ would have to be used.

The limiting behavior for $\tau\to0$, that in this numerical
scheme is achieved via a thermodynamics limit $n_0\to\infty$, 
reproduces quite well the theoretical prediction based on the 
homogenization computation discussed in Subsection~\ref{s:N=1}. 
Indeed, the data in Figure~\ref{f:costante} show neatly that the 
series with the largest $\bar\sigma$ approach, for $\tau\to0$, a value 
quite close to the theoretical predictions 
$3.568$ (case $D_1=0.1$)
and 
$2.257$ (case $D_1=0.25$).

We can finally state that the Monte Carlo measure of the constant $K$
is in very good agreement with the theoretical predictions discussed above
which, we recall, are based on a homogenization computation in 
presence of a spatial discontinuity of the diffusion coefficient.


\end{document}